\title{Normal approximation and almost sure central limit theorem for non-symmetric Rademacher functionals}
 \author{\small Guangqu ZHENG\footnote{\href{mailto:guangqu.zheng@uni.lu}{guangqu.zheng@uni.lu}, URM, Universit\'e du Luxembourg, Grand-Duch\'e de Luxembourg}}
\date{}
\newcommand*\Laplace{\mathop{}\!\mathbin\bigtriangleup}
\numberwithin{equation}{section}
\theoremstyle{definition}
\newtheorem{rem}{Remark}[section]
\newtheorem{lem}{Lemma}[section]
\newtheorem{defn}{Definition}[section]
\newtheorem{thm}{Theorem}[section]
\newtheorem{prop}{Proposition}[section]
\newtheorem{cor}{Corollary}[section]
\newtheorem{example}{Example}[section]
\def\R{\mathbb{R}}
\def\P{\mathbb{P}}
\def\wt{\widetilde}
\def\N{\mathbb{N}}
\def\H{\mathfrak{H}}
\def\F{\mathscr{F} }
\def\DD{\mathbb{D} }
\def\CC{\mathscr{C} }
\def\E{\mathbb{E}}
\def\G{\mathscr{G} }
\def\to{\rightarrow}
\def\lto{\longrightarrow}
\def\lmto{\longmapsto}
\def\bv{\big\vert}
\def\Bv{\Big\vert}
\def\1{\mathbbm{1}}
\def\H{\mathfrak{H}}
\def\Var{\textbf{Var}}
\begin{document}

\maketitle

\begin{abstract}
In this work, we study the normal approximation and almost sure central limit theorems for some functionals of an independent sequence of Rademacher random variables. In particular, we provide a new chain rule that improves the one derived by Nourdin, Peccati and Reinert(2010) and then we deduce the bound on Wasserstein distance for normal approximation using the (discrete) Malliavin-Stein approach. Besides, we are  able to give the almost sure central limit theorem for a sequence of random variables inside a fixed Rademacher chaos using the \emph{Ibragimov-Lifshits criterion}. 
\end{abstract}

\section{Introduction}

This work is  devoted to the study of discrete Malliavin-Stein approach for two kinds of Rademacher functionals: 
\begin{itemize}
\item[(S)] $Y_k, k\in\N$ is a sequence of independent identically distributed ({\it i.i.d}) Rademacher random variables, \emph{i.e.} $\P(Y_1 = -1) = \P(Y_1 = 1) = 1/2$.  $F = f(Y_1, Y_2, \cdot\cdot\cdot )$, for some nice function $f$, is called a (symmetric) Rademacher functional over $(Y_k)$.

\item[(NS)]  $X_k, k\in\N$ is a sequence of independent non-symmetric, non-homogeneous Rademacher random variables, that is, $\P(X_k = 1) = p_k$, $\P(X_k = -1) = q_k$ for each $k\in\N$. Here $1 - q_k = p_k\in(0,1)$ for each $k\in\N$.   Of course this sequence reduces to the i.i.d. one when $p_k = q_k = 1/2$ for each $k$.  $G = f(X_1, X_2, \cdot\cdot\cdot )$, for some nice function $f$, is called a (non-symmetric) Rademacher functional over $(X_k)$. Usually, we  consider the corresponding normalised sequence $(Y_k, k\in\N)$ of $X_k$, that is, $Y_k:= (X_k - p_k + q_k) \cdot (2\sqrt{p_kq_k} )^{-1}$.

\end{itemize}
From now on, we write (S) for the symmetric setting, and (NS) for the non-symmetric, non-homogeneous setting.

Now let us explain several terms in the title.  Malliavin-Stein method stands for the  combination of two powerful tools in probability theory: Paul Malliavin's differential calculus and Charles Stein's method of normal approximation. This  intersection   originates from the seminal paper \cite{NP1} by Nourdin and Peccati, who were able to associate a quantitative bound to the remarkable fourth moment theorem established by Nualart and Peccati \cite{FMT} among many other things. For a comprehensive overview, one can refer to the website \cite{website} and the recent monograph \cite{bluebook}.

  This method has found its extension to discrete settings:   for the Poisson setting, see \emph{e.g.} \cite{PSTU, cengbo_thesis}; for the Rademacher setting, the paper \cite{NPR} by Nourdin, Peccati and Reinert was the first one to carry out the analysis of normal approximation for Rademacher functionals (possibly depending on infinitely many Rademacher variables) in the setting (S), and they were able to get a sufficient condition in terms of contractions for a central limit theorem (CLT)  inside a fixed Rademacher chaos $\CC_m$ (with $m\geq 2$), see Proposition \ref{nishabi} for the precise statement.
  
     In the Rademacher setting, unlike the Gaussian case, one does not have the chain rule like $Df(F) = f'(F) DF$ for  $f\in C^1_b(\R)$ and Malliavin differentiable random variable $F$ (see \cite[Proposition 2.3.7]{bluebook}), while  an approximate chain rule (see \eqref{oldchain}) is derived in    \cite{NPR} and it requires quite much regularity of the function $f$. As a consequence, the authors of \cite{NPR} had to use smooth test functions when they applied the Stein's estimation: roughly speaking, for nice centred Rademacher functional $F$ in the setting (S), for $h\in C_b^2(\R)$, $Z\sim\mathscr{N}(0,1)$, one has (see \cite[Theorem 3.1]{NPR})
     \begin{align}\label{MSbdd_NPR}
    &\qquad  \bv \E\big[ h(F) - h(Z) \big] \bv  \notag \\
     &\leq \min\big( 4\| h\| _\infty, \| h''\|_\infty\big) \cdot  \E\Big[ \bv 1 - \langle DF, -DL^{-1}F \rangle_\H \big\vert \Big] +\frac{20}{3} \| h''\| _\infty \E\Big[ \big\langle \vert DL^{-1}F\vert, \vert DF \vert^3 \big\rangle_\H \Big] \,\, ,
     \end{align}
where the precise meaning of the above notation will be explained in the  Section 2.   

Krokowski, Reichenbachs and Th\"ale, carefully using a representation of the discrete Malliavin derivative $Df(F)$ and the fundamental theorem of calculus instead of the approximate chain rule \eqref{oldchain}, were able to deduce the Berry-Ess\'een bound in  \cite[Theorem 3.1]{KRC14} and its non-symmetric analogue in \cite[Proposition 4.1]{KRC15}: roughly speaking, for nice centred Rademacher functional $F$ in the setting (NS), 
\begin{align}
&\quad d_{K}\big(F, Z \big) : = \sup_{x\in\R} \Bv \P(F\leq x) -  \P(Z\leq x) \Bv \label{Kkkk_bdd} \\
&\leq  \E\Big[ \bv 1 - \langle DF, -DL^{-1}F \rangle_\H \big\vert \Big] + \frac{\sqrt{2\pi}}{8} \E\Big[ \big\langle  \frac{1}{\sqrt{pq}} \vert DL^{-1}F\vert, \vert DF \vert^2 \big\rangle_\H \Big] \label{BSbdd1} \\
&\quad + \frac{1}{2}\E\Big[ \big\langle \vert F\cdot DL^{-1}F\vert,  \frac{1}{\sqrt{pq}} \vert DF \vert^2 \big\rangle_\H \Big] + \sup_{x\in\R } \E\Big[ \big\langle \vert DL^{-1}F\vert,  \frac{1}{\sqrt{pq}}( DF)\cdot\1_{(F>x)} \big\rangle_\H \Big] \,\, . \label{BSbdd2}
\end{align}
The quantity $d_K(F,Z)$ defined in \eqref{Kkkk_bdd}  is called the Kolmogorov distance between $F$ and $Z$.

 For the setting (NS), the corresponding analysis including normal approximation and Poisson approximation has been taken up  in \cite{ET14,Krokowski15,KRC15,PT15}. 

  In this paper, we give a neat chain rule (see \ref{chainrule}), from which we are able to derive a bound on the Wasserstein distance 
  $$d_W(F, Z) : = \sup_{\| f' \| _\infty \leq 1}  \Bv \E\big[ f(F) - f(Z) \big] \Bv  $$ in both settings (NS) and (S), see \ref{NPbdd} and related remarks.

Another contribution of this work is  the almost sure central limit theorem  (ASCLT in the sequel) for Rademacher functionals.  We first give the following

\begin{defn} Given a sequence $\big( G_n, n\in\N \big)$ of real random variables  convergent in law to $Z\sim\mathscr{N}(0,1)$, we say the  ASCLT holds for $(G_n)$, if almost surely, for any bounded continuous $f: \R\to\R$, we have 
\begin{align}\label{asclt_def}   \frac{1}{\log n} \sum_{k=1}^n \frac{1}{k} f(G_k) \lto \E\big[ f(Z) \big] \,\, , \end{align}
as $n\to+\infty$.  In the definition, $\log n$ can be replaced by $\gamma_n : =  \sum_{k=1}^n k^{-1}$, ($\gamma_n - 1 \leq \log n \leq \gamma_n$).  Note  the condition \eqref{asclt_def} is equivalent to that   the random probability measure $\gamma_n^{-1} \sum_{k=1}^n  k^{-1} \cdot \delta_{G_k} $ weakly converges to the  standard Gaussian measure  almost surely, as $n\to+\infty$.

\end{defn}
The following criterion, due to Ibragimov and Lifshits,  gives a sufficient condition for the ASCLT.

\paragraph{Ibragimov-Lifshits criterion} 
\begin{align}\label{ILcondition}
\sup_{\vert t\vert \leq r} \sum_{n\geq 2} \frac{\E\big( \vert \Laplace_n(t) \vert^2 \big)}{n\gamma_n} < +\infty \,\, ,\quad \text{for every $r > 0$,}
\end{align}
where  
  \begin{equation}\label{difference}
  \Laplace_n(t) = \frac{1}{\gamma_n} \sum_{k=1}^n \frac{1}{k} \Big[ e^{i  t G_k } -  e^{-t^2/2} \Big] \, .
  \end{equation}
If $G_k\xrightarrow{\text{law}} Z\sim\mathscr{N}(0,1)$ and  \eqref{ILcondition} is satisfied, then the ASCLT holds for $(G_k)$. See \cite[Theorem 1.1]{ILpaper}.

The ASCLT  was first stated by Paul L\'evy without proof in the 1937 book ``{\it Th\'eorie de l'addition des variables al\'eatoires} '' and rediscovered by Brosamler \cite{Brosamler}, Schatte \cite{Schatte} independently in 1988.  The present form appearing in the above definition was stated by Lacey and Philipp \cite{LP1990} in 1990. And in 1999, Ibragimov and Lifshits \cite{ILpaper} gave the above sufficient condition.

Using this  criterion, the authors of \cite{BNT10} established the ASCLT for  functionals over general Gaussian fields. The Malliavin-Stein approach plays a crucial role in their work. Later, C. Zheng proved the ASCLT on the Poisson space in his Ph.D thesis  \cite[Chapter 5]{cengbo_thesis}.  And in this work, we prove the ASCLT in the Rademacher setting, see Section 3.2.

The rest of this paper is organised as follows: Section 2 is devoted to some preliminary knowledge on Rademacher functionals,  and we provide a simple but useful approximate chain rule there.  In Section 3.1, we establish the Wasserstein distance bound for normal approximation in  both setting (S) and (NS); in Section 3.2,  the ASCLT for Rademacher chaos is established.

 \section{Preliminaries}

We fix several notation first: $\N = \{1, 2,\cdot\cdot\cdot \}$,   $\textbf{Y}  = (Y_k, k\in\N )$ stands for the Rademacher sequence in the setting (S), and it also means the normalised sequence in the setting (NS).  Denote by $\G$ the $\sigma$-algebra generated by $\textbf{Y}$, for notational convenience, we write $L^2(\Omega)$ for $L^2(\Omega, \G, \P)$ in the sequel. We write $\H = \ell^2(\N)$ for the Hilbert space of square-summable sequences indexed by $\N$.  $\H^{\otimes n}$ means the $n^{th}$ tensor product space and $\H^{\odot n}$ its symmetric subspace. We denote $\H^{\odot n}_0 = \big\{ f\in\H^{\odot n} \,:\, f\vert_{\Laplace_n^c} = 0 \big\}$ with   $\Laplace_n = \big\{ (i_1, \cdot\cdot\cdot, i_n)\in\N^n \, :\, i_k \neq i_j$ for different $k,j \big\}$. Clearly, $\H^{\odot 0}_0 = \H^0 = \R$. For $u,v, \text{w}\in\H$, we write $\langle u, v\text{w} \rangle  = \sum_{k\in\N} u_k v_k \text{w}_k$.

\subsection{Discrete Malliavin calculus}

The basic reference for this section is the survey \cite{Privault1} by Privault.

\begin{defn}   The (discrete) $n^{th}$ order multiple stochastic integral $J_n(f)$ of $f\in\H^{\otimes n}$, $n\geq 1$, is given by  
 \begin{equation}\label{integral} J_n(f) = \sum_{(i_1, \cdot\cdot\cdot, i_n)\in\Laplace_n} f(i_1, \cdot\cdot\cdot, i_n) Y_{i_1}  Y_{i_2} \cdot\cdot\cdot Y_{i_n} \,\, . 
 \end{equation}
We define  $J_0(c)  = c$ for any $c\in\R$.  It is clear that $J_n(f) = J_n\big( \wt{f}\1_{\Laplace_n}\big)$, where $\wt{f}$ is  the standard symmetrisation of $f$.
\end{defn}
 For $g\in\H^{\odot n}_0$,  it is easy to check that $\| J_n(g) \| ^2_{L^2(\Omega)} = n!  \| g \| ^2_{\H^{\otimes n}}$ and $\CC_n : = \big\{ J_n(g)\, :\, g\in\H^{\odot n}_0 \big\}$ is isometric to $\big(\H^{\odot n}_0, \sqrt{n!} \| \cdot \| _{\H^{\otimes n}} \big)$. $\CC_n$ is called the \emph{Rademacher chaos} of order $n$, and one can see easily that $\CC_n$ is a closed linear subspace of $L^2(\Omega)$ and $\CC_n$, $\CC_m$ are mutually orthogonal for distinct $m,n$:
 \begin{align}\label{ISO}
 \E\big[ J_n(f) \cdot J_m(g) \big] = n! \cdot \big\langle f, g \big\rangle_{\H^{\otimes n}}\cdot \1_{(m=n)} , \qquad \forall f\in\H^{\odot n}_0 \,\,, \,\, g\in\H^{\odot m}_0 \,\, . 
 \end{align}

\paragraph{More notation} $\CC_0 := \R$.   We denote by $\mathcal{S}$ the linear subspace of $L^2(\Omega)$ spanned by multiple integrals  and it is a well-known result (\emph{e.g.} see \cite[Proposition 6.7]{Privault1}) that $\mathcal{S}$ is dense in $L^2(\Omega)$.  In particular,  $F\in L^2(\Omega)$ can be expressed as follows:
 \begin{align}\label{chaoticdecom}F = \E[F] + \sum_{n\geq 1} J_n(f_n) \,\,,\quad \text{where $f_n\in\H^{\odot n}_0$ for each $n\in\N$.} \end{align}
We denote by  $L^2(\Omega\times\N)$  the space of  square-integrable random sequences $a  = (a_k, k\in\N )$, where $a_k$ is a real random variable for each $k\in\N$ and $\| a \| _{L^2(\Omega\times\N)}^2 := \E\big[  \| a \| _\H^2 \big] =  \sum_{k\geq 1} \E\big[ a_k^2 \big]  < +\infty$.

\begin{defn}    $\DD$ is the set of random variables $F\in L^2(\Omega)$  as in  \eqref{chaoticdecom} satisfying $ \sum_{\ell = 1}^\infty \ell \ell! \| f_\ell \| ^2_{\H^{\otimes\ell}} < +\infty$.  For $F\in\DD$ as in \eqref{chaoticdecom}, $D_k F = \sum_{n\geq 1} n J_{n-1}(f_n(\cdot, k))$ for each $k\in\N$. $DF = (D_kF, k\in\N)$ is called the  discrete Malliavin derivative of $F$.
\end{defn}

\begin{rem}    Using \eqref{chaoticdecom} and \eqref{ISO}, we can obtain the \emph{Poincar\'e inequality} for $F\in\DD$, $\Var(F) \leq \E\big[ \| DF \| _\H ^2 \big]$.       \end{rem}

\begin{defn}  We define the {\it divergence operator} $\delta$ as the adjoint operator of $D$. We say  $u\in \text{dom}\delta \subset L^2(\Omega\times\N)$ if there exists some constant $C$ such that $\vert \E[ \langle u, DF \rangle_\H ] \vert \leq C \| F \| _{L^2(\Omega)}$ for any $F\in\DD$. Then it follows from the Riesz's representation theorem that there exists a unique element in $L^2(\Omega\times\N)$, which we denote by $\delta(u)$, such that the duality relation \eqref{duality} holds for any $F\in\DD$:
\begin{align} \label{duality} 
  \E\big[ \langle u, DF \rangle_\H \big] = \E\big[ F \delta(u) \big] \, . 
  \end{align}

\end{defn}

\begin{defn} We define the {\it Ornstein-Uhlenbeck operator} $L$ by $L = -\delta D$.   Its domain is given by $\text{dom}L =\{ F \in L^2(\Omega)$ admits the chaotic decomposition as in \eqref{chaoticdecom} such that  $\sum_{n=1}^\infty n^2 n! \cdot\| f_n \| ^2_{\H^{\otimes n}}$ is finite$\}$.   For centred $F\in L^2(\Omega)$  as in \eqref{chaoticdecom}, we define $L^{-1}F = -\sum_{n=1}^\infty  n^{-1} J_n(f_n)$.
 It is clear that for such a $F$, one has $LL^{-1}F = F$.   We call $L^{-1}$  the pseudo-inverse of $L$.

\end{defn}

Here is another look at the derivative operator $D$.

\begin{rem}  We choose $\Omega = \{ +1, -1 \}^\N$ and define $\P = \bigotimes_{k\in\N} \big( p_k \delta_{+1} + q_k \delta_{-1} \big)$. Then the coordinate projections $\omega = \{ \omega_1, \cdot\cdot\cdot \} \in\Omega \lmto  \omega_k = : X_k(\omega)$ is an independent sequence of non-symmetric, non-homogeneous Rademacher random variables under $\P$.  We can define for $F\in L^2(\Omega)$, $F^{\oplus k}: = F(\omega_1, \cdot\cdot\cdot, \omega_{k-1}, 1, \omega_{k+1}, \cdot\cdot\cdot)$, that is, by fixing the $k^{th}$ coordinate in the configuration $\omega$ to be $1$. Similarly, we define  $F^{\ominus k}: = F(\omega_1, \cdot\cdot\cdot, \omega_{k-1}, -1, \omega_{k+1}, \cdot\cdot\cdot)$.  It holds that $D_kF = \sqrt{p_kq_k}\big( F^{\oplus k} - F^{\ominus k} \big)$, see \emph{e.g.} \cite[Proposition 7.3]{Privault1}. The following results are also clear :
 \begin{itemize}
 \item[$\bullet$] $\bv F^{\oplus k} - F \bv = \1_{(X_k = - 1)} \cdot \vert D_kF\vert / \sqrt{p_kq_k} \leq    \vert D_kF\vert/  \sqrt{p_kq_k} $ and $\bv F^{\ominus k} - F \bv =  \1_{(X_k =  1)} \cdot  \vert D_kF\vert/ \sqrt{p_kq_k}    \leq    \vert D_kF\vert/  \sqrt{p_kq_k}$.

\item[$\bullet$]  $F\in\DD$ if and only if $\sum_{k\in\N} p_kq_k \E\big[ \vert F^{\oplus k} - F^{\ominus k}\vert^2 \big] < +\infty$. In particular,  if $f:\R\to\R$ is Lipschitz continuous, then  $f(F)\in\DD$. 

\end{itemize}

\end{rem}

The following integration-by-part formula is important for our work.

\begin{lem}(\cite[Lemma 2.12]{NPR})\label{IBP} For every centred $F,G\in\DD$ and $f\in C^1(\R)$ with $\| f' \| _\infty < +\infty$,  one has $f(F),L^{-1}F\in\DD$ and $\E\big[ Gf(F) \big] = \E\big[ \langle -DL^{-1}G, Df(F) \rangle_\H \big]$.      \end{lem}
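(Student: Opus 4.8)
The plan is to read the identity directly off the duality relation \eqref{duality}, once all the objects in the statement have been shown to make sense. Concretely, three things need to be checked: that $f(F)\in\DD$; that $L^{-1}F$ and $L^{-1}G$ belong to $\DD$; and that $-DL^{-1}G\in\text{dom}\delta$ with $\delta(-DL^{-1}G)=G$. Granting these, the conclusion is a single application of \eqref{duality}.

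First, since $f\in C^1(\R)$ with $\|f'\|_\infty<+\infty$, the function $f$ is globally Lipschitz, so $|f(F)|\le|f(0)|+\|f'\|_\infty|F|$ shows $f(F)\in L^2(\Omega)$, while the pointwise representation $D_kf(F)=\sqrt{p_kq_k}\big(f(F^{\oplus k})-f(F^{\ominus k})\big)$ recalled above, together with $|f(F^{\oplus k})-f(F^{\ominus k})|\le\|f'\|_\infty|F^{\oplus k}-F^{\ominus k}|$ and the criterion for membership in $\DD$, gives $f(F)\in\DD$ with $\E\big[\|Df(F)\|_\H^2\big]\le\|f'\|_\infty^2\,\E\big[\|DF\|_\H^2\big]<+\infty$. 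Second, writing the chaotic decomposition $G=\sum_{n\ge1}J_n(g_n)$ (legitimate as $G$ is centred in $\DD$), the $n$-th kernel of $L^{-1}G=-\sum_{n\ge1}n^{-1}J_n(g_n)$ is $-n^{-1}g_n$, and $\sum_{n\ge1}n\,n!\,\|n^{-1}g_n\|_{\H^{\otimes n}}^2=\sum_{n\ge1}n^{-1}\,n!\,\|g_n\|_{\H^{\otimes n}}^2\le\E[G^2]<+\infty$, so $L^{-1}G\in\DD$; the same sum with weight $n^2$ instead of $n$ equals $\E[G^2]$ as well, hence $L^{-1}G\in\text{dom}L$ (and likewise $L^{-1}F\in\DD$). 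Since $L=-\delta D$ on $\text{dom}L$, it follows that $DL^{-1}G\in\text{dom}\delta$ with $\delta(DL^{-1}G)=-LL^{-1}G=-G$, i.e. $-DL^{-1}G\in\text{dom}\delta$ and $\delta(-DL^{-1}G)=G$.

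Finally, I would apply \eqref{duality} with $u=-DL^{-1}G\in\text{dom}\delta$ and with $f(F)\in\DD$ in the role of the test variable; the pairing is integrable by Cauchy--Schwarz, $\E\big[|\langle -DL^{-1}G,Df(F)\rangle_\H|\big]\le\E\big[\|DL^{-1}G\|_\H^2\big]^{1/2}\E\big[\|Df(F)\|_\H^2\big]^{1/2}<+\infty$, so
\[
\E\big[\langle -DL^{-1}G,\,Df(F)\rangle_\H\big]=\E\big[f(F)\,\delta(-DL^{-1}G)\big]=\E\big[Gf(F)\big].
\]
This argument is essentially bookkeeping; the only point deserving a little care is the identification of the chaos-expansion description of $\text{dom}L$ with the domain of $-\delta D$, which ensures that $L^{-1}G\in\text{dom}L$ really does force $DL^{-1}G\in\text{dom}\delta$ — a standard fact that I would quote from \cite{Privault1}. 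Alternatively one could prove the identity first for $G=J_n(g_n)$ with $f$ a polynomial, where everything is explicit, and then lift the restrictions by density together with the $L^2$-bounds above; I do not foresee any genuine obstacle either way.
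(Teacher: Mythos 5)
Your proof is correct, and since the paper does not prove this lemma itself but simply imports it from \cite[Lemma 2.12]{NPR}, your argument essentially reproduces the standard proof behind that citation: Lipschitz stability of $\DD$ to get $f(F)\in\DD$, the chaos-expansion computation showing $L^{-1}F, L^{-1}G\in\DD$ with $L^{-1}G\in\text{dom}L$ and $\delta(-DL^{-1}G)=LL^{-1}G=G$, and then a single application of the duality relation \eqref{duality} with $f(F)$ as test variable. The one point needing care --- that the chaotic description of $\text{dom}L$ is compatible with $L=-\delta D$, so that $DL^{-1}G\in\text{dom}\delta$ --- you correctly identified and can indeed be settled either by quoting \cite{Privault1} or by the direct estimate $\vert\E[\langle DL^{-1}G,DF\rangle_\H]\vert=\vert\sum_{n\geq1}n!\langle g_n,f_n\rangle_{\H^{\otimes n}}\vert\leq\Vert G\Vert_{L^2(\Omega)}\Vert F\Vert_{L^2(\Omega)}$, which verifies the defining property of $\text{dom}\delta$ directly.
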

In particular, for $f(x) = x$, $\E\big[F^2\big]  =  \E\big[ \langle -DL^{-1}F, DF \rangle_\H \big]$.  The random variable $\langle -DL^{-1}F, DF \rangle_\H$ is crucial in the Malliavin-Stein approach, see \emph{e.g.} \cite{NP1} and \cite{PSTU}.

The term $Df(F)$ is not equal to $f'(F)DF$ in general, unlike the chain rule on Gaussian Wiener space,  see \emph{e.g.} \cite[Proposition 2.3.7]{bluebook}.  The following is our new approximate chain rule.

\begin{lem}\label{chainrule} {\bf\small (Chain rule)}  If $F\in\DD$ and $f:\R\lto\R$ is Lipschitz and of class $C^1$ such that  $f'$ is Lipschitz continuous, then
 \begin{align}
 D_kf(F) = f'(F)D_k F + R_k , \label{remainder_term_NS}
\end{align}
where the remainder term $R_k$ is bounded by $ \dfrac{\| f'' \| _\infty}{2\sqrt{p_kq_k}}   \cdot \vert D_k F \vert^2$ in the setting (NS).
\end{lem}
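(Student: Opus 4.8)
The plan is to bypass any chaos-expansion manipulation entirely and argue pointwise, starting from the representation of the discrete derivative recorded in the Remark above, namely $D_kF = \sqrt{p_kq_k}\,(F^{\oplus k} - F^{\ominus k})$. First I would note that since $f$ is Lipschitz, the same Remark guarantees $f(F)\in\DD$, so $D_kf(F)$ is well defined; moreover, fixing the $k$-th coordinate of the configuration $\omega$ plainly commutes with applying $f$, so that $\big(f(F)\big)^{\oplus k} = f\big(F^{\oplus k}\big)$ and $\big(f(F)\big)^{\ominus k} = f\big(F^{\ominus k}\big)$, hence
\[
D_kf(F) = \sqrt{p_kq_k}\Big( f\big(F^{\oplus k}\big) - f\big(F^{\ominus k}\big) \Big).
\]

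Next I would Taylor-expand $f$ to second order at the point $F$, applied at the two arguments $F^{\oplus k}$ and $F^{\ominus k}$. Since $f'$ is Lipschitz we have $\|f''\|_\infty<+\infty$, and the integral form of Taylor's remainder gives
\[
f\big(F^{\oplus k}\big) - f(F) - f'(F)\big(F^{\oplus k} - F\big) = \int_{F}^{F^{\oplus k}} \big(F^{\oplus k} - t\big) f''(t)\,dt,
\]
together with the analogous identity with $\ominus k$ replacing $\oplus k$; the right-hand side is bounded in absolute value by $\tfrac12\|f''\|_\infty (F^{\oplus k}-F)^2$, and similarly for $\ominus k$. Subtracting the two identities, multiplying by $\sqrt{p_kq_k}$, and recognising $f'(F)\,\sqrt{p_kq_k}(F^{\oplus k}-F^{\ominus k}) = f'(F)D_kF$, one reads off \eqref{remainder_term_NS} with $R_k$ equal to $\sqrt{p_kq_k}$ times the difference of the two (bounded) Taylor remainders.

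For the final estimate, the one place where care is genuinely needed is the bookkeeping that produces the constant $1/2$ rather than $1$: at every $\omega$ exactly one of the quantities $F^{\oplus k}-F$ and $F^{\ominus k}-F$ vanishes — the first on $\{X_k=1\}$ and the second on $\{X_k=-1\}$ — so only one of the two remainder terms is active pointwise. Combining this with the bounds $|F^{\oplus k}-F|\le |D_kF|/\sqrt{p_kq_k}$ and $|F^{\ominus k}-F|\le |D_kF|/\sqrt{p_kq_k}$ from the Remark yields
\[
|R_k| \le \frac{\sqrt{p_kq_k}}{2}\,\|f''\|_\infty\Big[\big(F^{\oplus k}-F\big)^2 + \big(F^{\ominus k}-F\big)^2\Big] \le \frac{\sqrt{p_kq_k}}{2}\,\|f''\|_\infty\cdot\frac{|D_kF|^2}{p_kq_k} = \frac{\|f''\|_\infty}{2\sqrt{p_kq_k}}\,|D_kF|^2,
\]
which is the asserted bound; a cruder triangle-inequality estimate that does not exploit the vanishing of one term would lose a factor $2$. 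The symmetric setting (S) is simply the specialisation $p_k=q_k=1/2$. I expect this last step — namely arranging the argument so that the two Taylor remainders are never simultaneously present — to be the only subtle point; everything else is a direct consequence of the difference-operator representation and Taylor's theorem.
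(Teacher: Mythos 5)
Your proposal is correct and follows essentially the same route as the paper's own proof: write $D_kf(F)=\sqrt{p_kq_k}\big[f(F^{\oplus k})-f(F)-\big(f(F^{\ominus k})-f(F)\big)\big]$, Taylor-expand each increment at $F$ with a second-order remainder, and use that $|F^{\oplus k}-F|$ and $|F^{\ominus k}-F|$ carry the disjoint indicators $\1_{(X_k=-1)}$ and $\1_{(X_k=1)}$, so only one remainder is active pointwise, which is exactly how the paper secures the constant $\tfrac{1}{2\sqrt{p_kq_k}}$.
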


\begin{proof} Note first  $f(F)\in\DD$, since $f$ is Lipschitz. Moreover, since $f$ is of class $C^1$ with Lipschitz derivative, it follows immediately  that 
 $f(y) - f(x) = f'(x)(y-x) + \mathcal{R}(f)$, where
 the remainder term $\mathcal{R}(f)$ is bounded by $\| f'' \| _\infty \cdot \vert y - x \vert^2/2$. Therefore, in the setting (NS)
\begin{align*}
 D_kf(F) &= \sqrt{p_kq_k} \cdot\big[ f( F^{\oplus k} ) -   f( F^{\ominus k} ) \big] \\
 &= \sqrt{p_kq_k} \cdot \Big\{ f( F^{\oplus k} ) - f(F) - \big[   f( F^{\ominus k} )  -f(F) \big] \Big\} \\
 &=\sqrt{p_kq_k} \cdot \Big\{ \,   f'(F)(  F^{\oplus k} -   F ) + \mathcal{R}_{1,k} -  f'(F)(  F^{\ominus k} -   F ) + \mathcal{R}_{2,k} \Big\}
 \end{align*}
 with $\vert \mathcal{R}_{1,k}\vert \leq  \dfrac{\| f'' \| _\infty\cdot\vert D_kF\vert^2}{2p_kq_k}\cdot \1_{(X_k = -1)}$ and $\vert \mathcal{R}_{2,k}\vert \leq  \dfrac{\| f'' \| _\infty\cdot\vert D_kF\vert^2}{2p_kq_k}\cdot \1_{(X_k = 1)}$.
 
 Whence, $ D_kf(F) = \sqrt{p_kq_k} \cdot f'(F) \big(   F^{\oplus k}  -   F^{\ominus k} \big) + R_k = f'(F)D_kF + R_k$ and the remainder term $R_k = \sqrt{p_kq_k}\big( \mathcal{R}_{1,k}  + \mathcal{R}_{2,k} \big)  $ is bounded by $\dfrac{\| f''\| _\infty\cdot\vert D_kF\vert^2}{2\sqrt{p_kq_k}}$.        \end{proof}

It is clear that in the setting  (S), the remainder $R_k$ in \eqref{remainder_term_NS}  is bounded by $\| f'' \| _\infty \cdot \vert D_k F \vert^2$.

\begin{rem} In the setting (S), our approximate chain rule is different from that developed in \cite{NPR}, in which $f$ is assumed to be of class $C^3$ such that $f(F)\in\DD$ and $\| f'''\| _\infty < +\infty$. Moreover,  their chain rule is given as follows:
  \begin{align}\label{oldchain}
  D_kf(F) = f'(F) D_kF - \frac{1}{2} \Big[ f''(F^{\oplus k}) +   f''(F^{\ominus k}) \Big]\cdot (D_kF)^2\cdot Y_k + \wt{R}_k 
  \end{align}
with $\bv \wt{R}_k  \bv \leq \frac{10}{3}\| f''' \| _\infty  \bv D_kF\bv^3$. Apparently, ours is neater and  requires less regularity of $f$. This is important when we try to get some nice distance bound in Section 3.1. Following \cite{NPR}, the authors of \cite{PT15} gave an approximate chain rule in the setting (NS): if $f$ is  of class $C^3$ such that $f(F)\in\DD$ and $\| f'''\| _\infty < +\infty$, then 
  \begin{align}\label{oldchain_B}
  D_kf(F) = f'(F) D_kF - \frac{\vert D_kF\vert^2}{4\sqrt{p_kq_k}} \Big[ f''(F^{\oplus k}) +   f''(F^{\ominus k}) \Big]\cdot (D_kF)^2\cdot X_k + R^F_k 
  \end{align}
with the remainder term $R^F_k $ bounded by $\dfrac{5}{3!} \| f''' \| _\infty \dfrac{\vert D_kF\vert^3}{p_kq_k}$.  See   also Remark \ref{old_new_chain_rule}.

\end{rem}

\subsection{Star-contractions}

 Fix $m,n\in\N$, and   $r = 0, \cdot\cdot\cdot, n\wedge m$.  For $f\in\H^{\otimes n}$ and $g\in\H^{\otimes m}$,    $f \star^r_r g$ is an element in $\H^{\otimes n+m - 2r}$  defined by  
\begin{align*}
   f \star^r_r g(i_1, \cdot\cdot\cdot, i_{n-r},  j_1, \cdot\cdot\cdot, j_{m-r} ) =  \sum_{a_1, \cdot\cdot\cdot, a_r\in\N} f\big(  i_1, \cdot\cdot\cdot, i_{n-r},  a_1, \cdot\cdot\cdot, a_r \big)   g\big(  j_1, \cdot\cdot\cdot, j_{m-r}, a_1, \cdot\cdot\cdot, a_r \big)  \,\, . 
             \end{align*}

\begin{lem}\label{easy_lemma}  Fix $\ell\in\N$ and $0\leq r \leq \ell$. If $f, g\in\H^{\odot \ell}$, then 
 $$2\big\| f \star^r_r g \big\| ^2_{\H^{\otimes 2\ell - 2r}} \leq   \big\| f \star^{\ell-r}_{\ell-r} f \big\| ^2_{\H^{\otimes 2r}} +   \big\| g \star^{\ell-r}_{\ell-r} g \big\| ^2_{\H^{\otimes 2r}} \, .$$
In particular, $\big\| f \star^r_r g \big\| _{\H^{\otimes 2\ell - 2r}} \leq    \big\| f \star^{\ell-r}_{\ell-r} f \big\| _{\H^{\otimes 2r}} +   \big\| g \star^{\ell-r}_{\ell-r} g \big\| _{\H^{\otimes 2r}}$.
\end{lem}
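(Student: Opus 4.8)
The plan is to reduce the statement to a single exact identity and then invoke only elementary inequalities. The identity I would aim for is
\[
\big\| f \star^r_r g \big\|^2_{\H^{\otimes 2\ell-2r}} \;=\; \big\langle f \star^{\ell-r}_{\ell-r} f,\; g \star^{\ell-r}_{\ell-r} g \big\rangle_{\H^{\otimes 2r}} .
\]
To obtain it I would expand the left-hand side directly from the definition of the contraction. Writing a generic multi-index of $\H^{\otimes \ell-r}$ as $i$ or $j$ and of $\H^{\otimes r}$ as $a$ or $b$, one has $(f \star^r_r g)(i,j) = \sum_{a} f(i,a) g(j,a)$, hence
\[
\big\| f \star^r_r g \big\|^2 \;=\; \sum_{i,j} \Big( \sum_{a} f(i,a) g(j,a) \Big)\Big( \sum_{b} f(i,b) g(j,b) \Big) \;=\; \sum_{a,b} \Big( \sum_{i} f(i,a) f(i,b) \Big)\Big( \sum_{j} g(j,a) g(j,b) \Big).
\]
The symmetry hypothesis $f,g \in \H^{\odot\ell}$ enters exactly here: it lets me rewrite $\sum_i f(i,a) f(i,b) = \sum_i f(a,i) f(b,i) = (f \star^{\ell-r}_{\ell-r} f)(a,b)$, and similarly for $g$, which is precisely the claimed identity.

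Granting the identity, the Cauchy--Schwarz inequality in $\H^{\otimes 2r}$ gives $\| f \star^r_r g \|^2 \le \| f \star^{\ell-r}_{\ell-r} f \| \cdot \| g \star^{\ell-r}_{\ell-r} g \|$. Applying $xy \le \tfrac12(x^2+y^2)$ with $x = \| f \star^{\ell-r}_{\ell-r} f \|$ and $y = \| g \star^{\ell-r}_{\ell-r} g \|$ then yields the first displayed bound of the Lemma; applying instead $\sqrt{xy} \le x+y$ (valid for $x,y\ge 0$) yields the ``in particular'' bound $\| f \star^r_r g \| \le x + y$.

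The one genuine technical point is the interchange of summations that passes from the first display to the second: the summand there is not sign-definite, so I cannot quote Tonelli verbatim. I would handle this by first reducing to the case in which the right-hand side of the identity is finite (the inequalities being trivial otherwise); in that case $f \star^{\ell-r}_{\ell-r} f$ and $g \star^{\ell-r}_{\ell-r} g$ belong to $\H^{\otimes 2r}$, and a Cauchy--Schwarz estimate shows the family $\{ f(i,a) g(j,a) f(i,b) g(j,b) \}_{i,j,a,b}$ is absolutely summable, so Fubini's theorem legitimizes the rearrangement. (Alternatively one can run the whole computation through Hilbert-space inner products, noting that $c \mapsto f(a,c)$ lies in $\H^{\otimes\ell-r}$ for every $a$ since $\sum_a \| f(a,\cdot) \|^2 = \| f \|^2 < \infty$.) This bookkeeping around integrability is the step I expect to require the most care; the rest is routine index manipulation.
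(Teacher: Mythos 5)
Your proof is correct and follows essentially the same route as the paper: the key identity $\| f \star^r_r g \|^2_{\H^{\otimes 2\ell-2r}} = \langle f \star^{\ell-r}_{\ell-r} f , g \star^{\ell-r}_{\ell-r} g \rangle_{\H^{\otimes 2r}}$ (which the paper states as following directly from the definition, and which you verify by the index rearrangement using the symmetry of $f$ and $g$), followed by Cauchy--Schwarz and the elementary inequality $2xy \leq x^2 + y^2$. Your extra care with the Fubini/summability justification is a fine addition but not a departure from the paper's argument.
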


\begin{proof} It follows easily  from the definition that
\begin{align*}
2\big\| f \star^r_r g \big\| ^2_{\H^{\otimes 2\ell -2r}} &= 2 \big\langle f \star^{\ell-r}_{\ell-r} f , g \star^{\ell-r}_{\ell-r} g \big\rangle_{\H^{\otimes 2r}} \\
 &\leq  \big\| f \star^{\ell-r}_{\ell-r} f \big\| ^2_{\H^{\otimes 2r}} +   \big\| g \star^{\ell-r}_{\ell-r} g \big\| ^2_{\H^{\otimes 2r}}  .
\end{align*}
\end{proof}

\subsection{Stein's method of normal approximation}

A basic reference for Stein's method is the monograph \cite{CGS}.  Let us start with a fundamental fact that  a real integrable random variable   $Z$ is a standard Gaussian random variable if and only if  $\E\big[ f'(Z) \big] = \E\big[ Z \cdot f(Z) \big]$ for every  bounded differentiable  function $f : \R\lto \R$.

Now suppose that $Z\sim\mathscr{N}(0,1)$,  for $h:\R\lto\R$ measurable such that $\E\bv h(Z)\bv < +\infty$, the differential equation $f'(x) - xf(x) = h(x) - \E[ h(Z) ]$ with unknown $f$ is called  the \emph{Stein's equation} associated with $h$. We call $f$ its solution, if $f$ is absolutely continuous and one version of $f'$ satisfies the Stein's equation everywhere. More precisely, we take $f'(x) = xf(x) + h(x) - \E\big[ h(Z)\big]$ for every $x\in\R$.

 It  is well known (see \emph{e.g.} \cite[Chapter 2]{CGS}, \cite[Chapter 3]{bluebook}) that given such a function $h$, there exists a unique solution $f_h$ to the Stein's equation such that $\lim_{\vert x \vert \to+\infty} f_h(x) e^{-x^2/2} = 0$.  Given a suitable \emph{separating}  class $\F$ of nice functions, we  define 
$$d_\F\big( X, Z \big) := \sup_{f\in\F} \bv \E\big[ f(X) \big] -  \E\big[ f(Z) \big] \bv \,\, . $$
When $\F$ is set of $1$-Lipschitz functions, $d_\F$ is  the Wasserstein distance; when $\F$ is set of $1$-Lipschitz functions that are also uniformly bounded by $1$, $d_\F$ is called the Fortet-Mourier distance; when $\F$ is the collection of indicator functions $\1_{(-\infty, z]}$, $z\in\R$, $d_\F$ corresponds to the Kolmogorov distance appearing in the Berry-Ess\'een bound. We denote by $d_W, d_{\text{FM}}, d_{K}$ respectively  these  distances. It is trivial that $d_{FM}\leq d_W$, and it is not difficult to show that $d_K(X, Y) \leq \sqrt{2C\cdot d_W(X,Y)}$ if $X$ has density function uniformly bounded by $C$.

Now we replace the dummy variable $x$ in the Stein's equation by a generic random variable $X$, then taking expectation on both sides of the equation gives $\E[ Xf_h(X) - f_h'(X) ] = \E[ h(X) - h(Z)]$.   

Here we collect several bounds for the Stein's solution $f_h$:

\begin{itemize}

\item For $h:\R\lto\R$ $1$-Lipschitz, $f_h$ is of class $C^1$ and $f_h'$ is bounded Lipschitz with $\| f_h' \| _\infty \leq \sqrt{2/\pi}$, $\| f_h'' \| _\infty \leq 2$, see \emph{e.g.} \cite[Lemma 4.2]{Chatterjee08}.  We denote by $\F_W$ the family of differentiable functions $\phi$ satisfying $\| \phi' \| _\infty \leq \sqrt{2/\pi}$, $\| \phi'' \| _\infty \leq 2$, therefore  for any square-integrable random variable $F$,
\begin{align}\label{d_W}
d_{\text{FM}}(F,Z) \leq d_W(F, Z)   \leq  \sup_{\phi\in\F_W}  \Bv \E\big[ F\phi(F) - \phi'(F) \big] \Bv \,\, .
\end{align}

\item  If $h= \1_{(-\infty, z]}$ for some $z\in\R$, then $0 < f_h \leq \frac{\sqrt{2\pi}}{4}$ and $\| f_h' \| _\infty \leq 1$, see  \cite[Lemma 2.3]{CGS}. We  write $\F_K:=\big\{ \phi \,: \, \| \phi' \| _\infty \leq 1$, $\| \phi \| _\infty \leq \frac{\sqrt{2\pi}}{4} \big\}$, therefore  for any integrable random variable $F$,
\begin{align}\label{d_K}
 d_K(F, Z)   \leq  \sup_{\phi\in\F_K}  \Bv \E\big[ F\phi(F) - \phi'(F) \big] \Bv \,\, .
\end{align}
As the density  of $Z$ is uniformly bounded by $1/\sqrt{2\pi}$, we have the easy bound $ d_K(F, N) \leq \sqrt{d_W(F,N)}$.

\end{itemize}

\section{Main results}

\subsection{Normal approximation  in Wasserstein distance}


In this subsection, we derive the Wasserstein distance bound for normal approximation of Rademacher functionals. Our new chain rule plays a crucial role.

\begin{thm}\label{NPbdd} Given $Z\sim\mathscr{N}(0,1)$ and $F\in\DD$ centred,   one has in the setting (NS) that
\begin{align} \label{Napp}
  d_W(F,Z)  \leq \sqrt{\frac{2}{\pi}} \cdot \E\Big[ \bv 1 - \big\langle  DF, - DL^{-1} F \big\rangle_\H \bv\Big] +  \E\left[\,   \Big\langle\,  \frac{1}{\sqrt{pq}} \vert DL^{-1} F \vert , \vert DF\vert^2 \Big\rangle_\H  \,\, \right]\,.
\end{align}
  In particular, if $F\in\CC_m$ for some  $m\in\N$, then 
\begin{align} \label{Gausspart}
 \E\Big[ \bv 1 - \big\langle  DF, - DL^{-1} F \big\rangle_\H \bv\Big] \leq \bv 1 - \E[F^2] \bv + \frac{1}{m} \sqrt{\Var\big( \| DF \| _\H^2 \big) }
\end{align}
and
\begin{align}\label{infpart}
\E\left[\,   \Big\langle\,  \frac{1}{\sqrt{pq}} \vert DL^{-1} F \vert , \vert DF\vert^2 \Big\rangle_\H  \,\, \right] \leq \sqrt{\E[F^2]/m} \cdot \sqrt{ \sum_{k\in\N}  \frac{1}{p_kq_k}  \E\big[ \vert D_kF\vert^4  \big] } \, .
\end{align}
\end{thm}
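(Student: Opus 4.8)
To prove \eqref{Napp} I would start from the Stein bound \eqref{d_W}: for $\phi\in\F_W$ we must control $\bv\E[F\phi(F)-\phi'(F)]\bv$. Applying the integration-by-parts formula (Lemma~\ref{IBP}) with $f=\phi$ gives $\E[F\phi(F)]=\E[\langle -DL^{-1}F, D\phi(F)\rangle_\H]$. Now the new chain rule, Lemma~\ref{chainrule}, applies because $\phi'$ is Lipschitz with $\|\phi''\|_\infty\le 2$: write $D_k\phi(F)=\phi'(F)D_kF+R_k$ with $\vert R_k\vert\le (\sqrt{p_kq_k})^{-1}\vert D_kF\vert^2$ (using $\|\phi''\|_\infty\le 2$, the factor $\tfrac12\|\phi''\|_\infty$ becomes $1$). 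Substituting,
\begin{align*}
\E[F\phi(F)]=\E\big[\phi'(F)\langle -DL^{-1}F,DF\rangle_\H\big]+\E\big[\langle -DL^{-1}F, R\rangle_\H\big].
\end{align*}
Hence $\E[F\phi(F)-\phi'(F)]=\E\big[\phi'(F)\big(\langle -DL^{-1}F,DF\rangle_\H-1\big)\big]+\E\big[\langle -DL^{-1}F,R\rangle_\H\big]$, and bounding the first term by $\|\phi'\|_\infty\le\sqrt{2/\pi}$ times $\E\bv 1-\langle DF,-DL^{-1}F\rangle_\H\bv$, and the second by $\E\big[\langle \vert DL^{-1}F\vert,\vert R\vert\rangle_\H\big]\le \E\big[\langle (\sqrt{pq})^{-1}\vert DL^{-1}F\vert,\vert DF\vert^2\rangle_\H\big]$, taking the sup over $\phi\in\F_W$ yields \eqref{Napp}. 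I expect the only delicate point here is checking the integrability needed to justify the interchange of expectation and the $\ell^2$-sum in $\langle-DL^{-1}F,R\rangle_\H$, which follows since $F\in\DD$ controls $\|DF\|_\H$ and $\|DL^{-1}F\|_\H$ in $L^2$; one may also need to note $\phi\in\F_W$ can be taken with $\phi'$ Lipschitz so Lemma~\ref{chainrule} genuinely applies (the authors already recorded $\|f_h''\|_\infty\le 2$ for $1$-Lipschitz $h$).

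\textbf{The chaos estimates.} For \eqref{Gausspart}, assume $F=J_m(f_m)\in\CC_m$. Then $L^{-1}F=-\tfrac1m F$, so $\langle DF,-DL^{-1}F\rangle_\H=\tfrac1m\|DF\|_\H^2$. By the triangle inequality $\E\bv 1-\tfrac1m\|DF\|_\H^2\bv\le \bv 1-\E[\tfrac1m\|DF\|_\H^2]\bv+\E\bv \tfrac1m\|DF\|_\H^2-\E[\tfrac1m\|DF\|_\H^2]\bv$. The first summand equals $\bv 1-\E[F^2]\bv$ because $\E[\|DF\|_\H^2]=m\,\E[F^2]$ (this is the $f(x)=x$ case of Lemma~\ref{IBP}: $\E[F^2]=\E[\langle -DL^{-1}F,DF\rangle_\H]=\tfrac1m\E[\|DF\|_\H^2]$). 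The second summand is $\le \tfrac1m\sqrt{\Var(\|DF\|_\H^2)}$ by Cauchy--Schwarz (or Jensen), giving \eqref{Gausspart}.

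\textbf{The infinite-part estimate.} For \eqref{infpart}, again $DL^{-1}F=-\tfrac1m DF$, so the left side equals $\tfrac1m\E\big[\langle(\sqrt{pq})^{-1}\vert DF\vert,\vert DF\vert^2\rangle_\H\big]=\tfrac1m\E\big[\sum_k (p_kq_k)^{-1/2}\vert D_kF\vert^3\big]$. Writing $\vert D_kF\vert^3=\vert D_kF\vert\cdot\vert D_kF\vert^2$ and applying Cauchy--Schwarz on the product measure $\P\otimes(\text{counting on }\N)$, i.e. $\sum_k\E[a_k b_k]\le(\sum_k\E[a_k^2])^{1/2}(\sum_k\E[b_k^2])^{1/2}$ with $a_k=\vert D_kF\vert$ and $b_k=(p_kq_k)^{-1/2}\vert D_kF\vert^2$, gives $\le\tfrac1m\big(\sum_k\E[\vert D_kF\vert^2]\big)^{1/2}\big(\sum_k (p_kq_k)^{-1}\E[\vert D_kF\vert^4]\big)^{1/2}$. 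Since $\sum_k\E[\vert D_kF\vert^2]=\E[\|DF\|_\H^2]=m\,\E[F^2]$, the prefactor becomes $\tfrac1m\sqrt{m\,\E[F^2]}=\sqrt{\E[F^2]/m}$, which is exactly \eqref{infpart}. None of these three sub-steps is hard; the substantive content of the theorem is the first part, and within it the genuinely new ingredient is simply that Lemma~\ref{chainrule} needs only $\phi\in C^1$ with Lipschitz $\phi'$ — which is precisely the regularity the Wasserstein Stein solution has, whereas the older chain rule \eqref{oldchain} demanded $C^3$ and thus forced the use of smoothed test functions as in \eqref{MSbdd_NPR}.
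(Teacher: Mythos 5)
Your proposal is correct and follows essentially the same route as the paper's proof: integration by parts (Lemma \ref{IBP}) plus the new chain rule (Lemma \ref{chainrule}) applied to $\phi\in\F_W$ with $\tfrac12\|\phi''\|_\infty\le 1$ for \eqref{Napp}, then $DL^{-1}F=-\tfrac1m DF$ with the triangle and Cauchy--Schwarz inequalities for \eqref{Gausspart} and \eqref{infpart}. No gaps worth noting.
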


\begin{proof}  Given $\phi\in\F_W$, it follows from \ref{IBP} and \ref{chainrule} that 
\begin{align*}
 \E\big[ F\phi(F)  \big]  & =  \E\big[  \langle D\phi(F), -DL^{-1}F \rangle_\H   \big]  =  \E\big[  \phi'(F)\langle DF, -DL^{-1}F \rangle_\H + \langle R, -DL^{-1}F\rangle_\H   \big] ,
 \end{align*}
where $R = (R_k, k\in\N)$ is the remainder satisfying $\vert R_k\vert \leq \vert D_kF\vert^2/\sqrt{p_kq_k}$. Thus,
\begin{align*}
  \E\big[ F\phi(F)  \big] - \E[ \phi'(F)] = \E\Big[ \phi'(F) \Big( \langle DF, -DL^{-1}F \rangle_\H - 1 \Big) \Big] +  \E\big[ \langle R, -DL^{-1}F\rangle_\H   \big]    
\end{align*}
implying that 
$$\Bv\E\big[ F\phi(F) - \phi'(F) \big]  \Bv \leq \sqrt{\frac{2}{\pi}} \cdot  \E\Big[ \bv 1 - \big\langle  DF, - DL^{-1} F \big\rangle_\H \bv\Big] +     \E\left[\,   \Big\langle\,  \frac{1}{\sqrt{pq}} \vert DL^{-1} F \vert , \vert DF\vert^2 \Big\rangle_\H  \,\, \right] \,\, . $$
Hence \eqref{Napp} follows from \eqref{d_W}.

 If $F = J_m(f)$ with $m\in\N$, $f\in\H^{\odot m}_0$, then $D_kF = m J_{m-1}\big[f(\cdot, k)\big]$, $D_kL^{-1}F = - J_{m-1}\big[f(\cdot, k)\big]$. Recall $\E[F^2]  =  \E[ \langle DF, -DL^{-1}F \rangle_\H]$, thus \eqref{Gausspart} follows easily from triangle inequality and Cauchy-Schwarz inequality:
 \begin{align}
  \E\Big[ \bv 1 - \big\langle  DF, - DL^{-1} F \big\rangle_\H \bv\Big] & \leq \bv 1 - \E[F^2] \bv +  \E\Big[ \bv \E[F^2] -  \langle DF, -DL^{-1}F \rangle_\H \bv  \Big]  \notag \\
  &\leq \bv 1 - \E[F^2] \bv +   \sqrt{\Var\big( \langle DF, -DL^{-1}F \rangle_\H   \big) } \notag \\
  & = \bv 1 - \E[F^2] \bv + \frac{1}{m} \sqrt{\Var\big( \| DF \| _\H^2 \big) } \, . \notag
 \end{align}
 The inequality \eqref{infpart} is also an easy consequence of Cauchy-Schwarz inequality:
 \begin{align*} 
  \E\left[\,   \Big\langle\,  \frac{1}{\sqrt{pq}} \vert DL^{-1} F \vert , \vert DF\vert^2 \Big\rangle_\H  \,\, \right]  & =  \frac{1}{m} \sum_{k\in\N}   \E\left[ \,  \vert D_k F \vert \cdot   \frac{1}{\sqrt{p_kq_k}}  \vert D_kF\vert^2 \,\right] \\
  & \leq  \frac{1}{m}\sqrt{ \sum_{k\in\N} \E\big[ \vert D_k F \vert^2 \big]} \cdot \sqrt{ \sum_{k\in\N}  \frac{1}{p_kq_k}  \E\big[ \vert D_kF\vert^4  \big] }  \\
  & = \sqrt{\frac{\E[F^2]}{m}} \cdot \sqrt{ \sum_{k\in\N}  \frac{1}{p_kq_k}  \E\big[ \vert D_kF\vert^4  \big] } \,\, .
   \end{align*}

\end{proof}

\begin{rem}\label{old_new_chain_rule} (1) In the setting (S), the results in \ref{NPbdd} can be easily deduced by taking $p_k = q_k =1/2$ for each $k\in\N$. As we have mentioned earlier, our approximate chain rule  is neater than \eqref{oldchain} given in \cite[Proposition 2.14]{NPR}, since it requires less regularity (this is the key point for us to get the estimate in Wasserstein distance).  Although  the authors of \cite{NPR} were able to derive the Wasserstein distance via some smoothing argument,  they imposed  some further assumption and their rate of convergence is suboptimal compared to ours.

(2)    In the setting (NS), Privault and Torrisi used their approximate chain rule \eqref{oldchain_B} and the smoothing argument to obtain the Fortet-Mourier distance, see \cite[Section 3.3]{PT15}. It is suboptimal compared to our estimate in Wasserstein distance, in view of the trivial relation $d_{\text{FM}}\leq d_W$. 

(3)  Recall that the test function $\phi\in\F_K$ (see \eqref{d_K}) may not have Lipschitz derivative, so our approximate chain rule as well as those in \cite{NPR,PT15} does not work to achieve the bound in Kolmogorov distance. Instead of using the chain rule, the authors of \cite{KRC14}  carefully used a representation of the discrete Malliavin derivative $D\phi(F)$ and the fundamental theorem of calculus, this turns out to be flexible enough for them  to deduce the Berry-Ess\'een bound in the setting (S). Later they obtained the Berry-Ess\'een bound in the setting (NS) with applications  to   random graphs. One can easily see that  two terms in \eqref{BSbdd1} are almost the same as our bound in Wasserstein distance while there are two  extra terms \eqref{BSbdd2} in their Kolmogorov distance bound.

\end{rem}

Due to a comparison between \eqref{Kkkk_bdd} and \eqref{Napp},  we are able to replace the Kolmogorov distance in many statements in \cite{KRC14, KRC15} by the Wasserstein distacne (with fewer terms and slightly different multiplicative constants). For example, we will obtain the so-called second-order Poincar\'e inequality in Wasserstein distance in the following

\begin{rem}({\bf\small Second-order Poincar\'e inequality}) \quad  One can apply the Poincar\'e's inequality  to  $\langle -DL^{-1} F ,  DF \rangle_\H$:
 \begin{align}\label{hehe}
 \Var\big(  \langle -DL^{-1} F ,  DF \rangle_\H \big) \leq \E\Big[  \big\| D  \langle -DL^{-1} F ,  DF \rangle_\H  \big\| _\H^2 \Big],
 \end{align}
provided $ \langle -DL^{-1} F ,  DF \rangle_\H \in\DD$.

In \cite{KRC15}, Krokowski \emph{et al.} gave the bound on Kolmogorov distance, see \eqref{Kkkk_bdd}; they also established the so-called second-order Poincar\'e inequality as follows.  For $Z\sim\mathscr{N}(0,1)$ and $F\in\DD$ centred with unit variance, and $r,s,t\in(1,\infty)$ such that $r^{-1} +  s^{-1} + t^{-1} = 1$, it holds  that
\begin{align}\label{Second_K}
d_K\big(F,  Z\big) \leq A_1 + A_2 + \frac{\sqrt{2\pi}}{8} \cdot A_3 + A_4 + A_5 + A_6 + A_7 \, \, ,
\end{align}
where 
\begin{align*}
A_1 : &= \left( \,\, \frac{15}{4} \sum_{j,k,\ell =1}^\infty \sqrt{ \E\big[  (D_jF)^2(D_kF)^2   \big]  }  \sqrt{ \E\big[  (D_\ell D_jF)^2(D_\ell D_kF)^2   \big]  }  \,\, \right)^{1/2} ; \\
A_2 : & = \left( \,\,  \frac{3}{4}  \sum_{j,k,\ell=1}^\infty \frac{1}{p_\ell q_\ell}  \E\Big[  \, (D_\ell D_jF)^2(D_\ell D_kF)^2  \, \Big] \,\, \right)^{1/2}  ; \\
A_3 : &= \sum_{k=1}^\infty \frac{1}{\sqrt{p_kq_k}}  \E\big[ \vert D_k F \vert^3 \big] \,\,  ;  \quad A_4: = \frac{1}{2}  \big\| F \big\| _{L^r(\Omega)}  \sum_{k=1}^\infty  \frac{1}{\sqrt{p_kq_k}}  \big\| (D_kF)^2 \big\| _{L^s(\Omega)}  \big\| D_kF \big\| _{L^t(\Omega)} ; \\
A_5 : & = \left( \,\, \sum_{k\in\N} \frac{1}{p_kq_k} \E\Big[ (D_kF)^4 \Big] \, \right)^{1/2} \,\, ; \quad A_6 = \left( \,\, 3  \sum_{k,\ell=1}^\infty \frac{1}{p_kq_kp_\ell q_\ell}  \E\Big[ \big( D_\ell D_kF \big)^4 \,\Big] \,\, \right)^{1/2} ;\\
A_7 : & = \left( \,\, 6  \sum_{k,\ell=1}^\infty \frac{1}{p_kq_k}  \sqrt{ \E\big[ (D_kF)^4\big]  } \sqrt{\E\big[ (D_\ell D_kF)^4 \big]} \,\, \right)^{1/2} \,\, .
\end{align*}
Part of the proof for \eqref{Second_K} requires fine analysis of the discrete gradients in \eqref{hehe}. For more details, see \cite[Theorem 4.1]{KRC15}.  Following exactly the same lines, one can obtain the following second-order Poincar\'e inequality in Wasserstein distance by simply comparing \eqref{Kkkk_bdd} and \eqref{Napp}, as well as going through the proof of \cite[Theorem 4.1]{KRC15}:  $d_W(F, Z) \leq \sqrt{2/\pi} \cdot ( A_1 + A_2) + A_3$. Note the constants $r, s, t$ are not involved in $A_1, A_2, A_3$.

\end{rem}

In the end of this subsection, we recall from \cite{NPR}    sufficient conditions for CLT (in the  setting (S)) inside a fixed Rademacher chaos $\CC_\ell$ ($\ell\geq 2$). The analogous result in  the setting (NS) was proved in \cite[Section 5.3]{PT15}.

\begin{prop} (\cite[Theorem 4.1]{NPR})  In the setting (S), fix $\ell\geq 2$. If $F_n : = J_\ell(f_n)$ for some  $f_n\in\H^{\odot \ell}_0$, then 
\begin{align}
\Var\big( \| DF_n \| _\H^2 \big) \leq  C \sum_{m=1}^{\ell-1} \big\| f_n\star_m^m f_n \big\| ^2_{\H^{\otimes 2\ell-2m}} \, , \label{2221}
\end{align}
and 
\begin{align}
 \sum_{k\in\N} \E\big[ \bv D_kF_n\bv^4\big]  \leq  C \sum_{m=1}^{\ell-1}   \big\| f_n\star_m^m f_n \big\| ^2_{\H^{\otimes 2\ell-2m}} ,  \quad \label{4443} 
\end{align}
where the   constant $C$ only depends on $\ell$.  
\end{prop}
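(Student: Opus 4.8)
The plan is to prove the two estimates \eqref{2221} and \eqref{4443} by direct computation on the Rademacher chaos, following the approach of \cite{NPR}. The starting point is the explicit formula for the Malliavin derivative: since $F_n = J_\ell(f_n)$ with $f_n \in \H^{\odot\ell}_0$, we have $D_k F_n = \ell J_{\ell-1}\big(f_n(\cdot,k)\big)$ for each $k\in\N$, where $f_n(\cdot,k)$ denotes the function in $\ell-1$ variables obtained by fixing the last argument to $k$. Consequently
\begin{align*}
\| DF_n\|_\H^2 = \sum_{k\in\N} (D_k F_n)^2 = \ell^2 \sum_{k\in\N} J_{\ell-1}\big(f_n(\cdot,k)\big)^2 \, .
\end{align*}
First I would apply the product formula for multiple Rademacher integrals (the discrete analogue of the multiplication formula, available through the chaotic calculus in \cite{Privault1}) to expand each square $J_{\ell-1}(f_n(\cdot,k))^2$ into a sum of multiple integrals $J_{2\ell-2-2r}$ of the contractions $f_n(\cdot,k)\star^r_r f_n(\cdot,k)$, for $r = 0,\dots,\ell-1$, plus lower-order diagonal corrections specific to the Rademacher case (the $Y_i^2 = 1$ identity produces extra terms that one has to track carefully). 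Summing over $k$ turns $f_n(\cdot,k)\star^r_r f_n(\cdot,k)$ into $f_n \star^{r+1}_{r+1} f_n$, so after reorganising one obtains $\|DF_n\|_\H^2$ as $\E[F_n^2]$ plus a finite sum, over $m=1,\dots,\ell-1$, of multiple integrals built from $f_n\star^m_m f_n$ and its symmetrisations, together with the diagonal remainder terms.

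For \eqref{2221}, I would then take the variance, which kills the constant term $\E[F_n^2]$, and use the isometry \eqref{ISO} together with orthogonality of distinct chaoses: $\Var(\|DF_n\|_\H^2)$ becomes a sum of squared $\H^{\otimes\bullet}$-norms of (symmetrisations of) the contractions $f_n\star^m_m f_n$. Since symmetrisation does not increase the norm, and since the diagonal correction terms can be bounded by the same contraction norms (this is a standard but slightly tedious point — one estimates $\ell^2(\N)$-type diagonal sums by Cauchy–Schwarz against the off-diagonal contractions, exactly as in the Rademacher fourth-moment analysis), every contribution is controlled by $\sum_{m=1}^{\ell-1}\|f_n\star^m_m f_n\|^2_{\H^{\otimes 2\ell-2m}}$ up to a combinatorial constant depending only on $\ell$. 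For \eqref{4443}, I would instead write $\sum_k \E[(D_k F_n)^4] = \ell^4 \sum_k \E[J_{\ell-1}(f_n(\cdot,k))^4]$ and use hypercontractivity on the fixed chaos $\CC_{\ell-1}$ (valid in the Rademacher setting, see \cite{NPR}) to bound the fourth moment by a constant times the square of the second moment, $\E[J_{\ell-1}(f_n(\cdot,k))^2]^2 = ((\ell-1)!)^2 \|f_n(\cdot,k)\|_{\H^{\otimes\ell-1}}^4$; alternatively, and more sharply, expand $J_{\ell-1}(f_n(\cdot,k))^2$ by the product formula first and then take the second moment of the resulting sum. Summing the resulting $\|f_n(\cdot,k)\|^4$ over $k$ produces $\|f_n\star^{\ell-1}_{\ell-1}f_n\|^2$-type quantities — more precisely it yields $\sum_k (f_n\star^{\ell-1}_{\ell-1}f_n)(k,k)$-style expressions — which, after another application of Cauchy–Schwarz and Lemma \ref{easy_lemma} to pass between $f_n\star^r_r f_n$ and $f_n\star^{\ell-r}_{\ell-r}f_n$, are dominated by $\sum_{m=1}^{\ell-1}\|f_n\star^m_m f_n\|^2_{\H^{\otimes 2\ell-2m}}$.

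The main obstacle I anticipate is the careful bookkeeping of the Rademacher-specific diagonal terms in the product formula: unlike the Gaussian Wiener chaos, the product $J_{\ell-1}(g)^2$ does not expand purely into contractions $g\star^r_r g$ but carries additional terms supported on partial diagonals (coming from $Y_i^2=1$ and from the restriction to $\Laplace_n$), and one must verify that each such term is again bounded by the contraction norms $\|f_n\star^m_m f_n\|$ for some $1\le m\le \ell-1$ rather than by, say, $\|f_n\|^2$ alone. This is exactly the delicate estimate that already appears in \cite[proof of Theorem 4.1]{NPR}, and I would either invoke it directly or reproduce the Cauchy–Schwarz argument that converts a partial-diagonal sum of the form $\sum \big(f_n(\cdots,a,a,\cdots)\big)^2$ into a bound involving $\|f_n\star^m_m f_n\|_{\H^{\otimes 2\ell-2m}}$. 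Everything else — the product formula, the isometry \eqref{ISO}, hypercontractivity on a fixed chaos, and Lemma \ref{easy_lemma} — is standard and enters only through routine manipulation, so the combinatorial constant $C=C(\ell)$ can be left implicit.
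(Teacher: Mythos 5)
This proposition is not proved in the paper at all --- it is imported verbatim from \cite[Theorem 4.1]{NPR} --- so the relevant comparison is with the proof there, which your sketch essentially reconstructs: write $\|DF_n\|_\H^2=\ell^2\sum_k J_{\ell-1}\big(f_n(\cdot,k)\big)^2$, expand each square by the Rademacher multiplication formula, sum the kernels over $k$ so that $f_n(\cdot,k)\star^r_r f_n(\cdot,k)$ becomes $f_n\star^{r+1}_{r+1}f_n$, and conclude \eqref{2221} by orthogonality and the isometry \eqref{ISO}, with hypercontractivity on a fixed chaos (or a second use of the product formula) giving \eqref{4443}; this is sound. One simplification worth noting: in the setting (S) the product formula produces no genuinely new diagonal terms --- the kernels are exactly the symmetrised contractions restricted to the off-diagonal set $\Laplace_{2\ell-2-2r}$, since the corrections weighted by $p_k-q_k$ vanish when $p_k=q_k=1/2$ --- and both symmetrisation and restriction only decrease $\H^{\otimes\bullet}$-norms, so the ``careful bookkeeping'' you single out as the main obstacle is in fact benign. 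Moreover, your hypercontractive route to \eqref{4443} closes by itself: $\sum_k\|f_n(\cdot,k)\|^4_{\H^{\otimes(\ell-1)}}=\sum_k\big((f_n\star^{\ell-1}_{\ell-1}f_n)(k,k)\big)^2\le\big\|f_n\star^{\ell-1}_{\ell-1}f_n\big\|^2_{\H^{\otimes2}}$, so only the $m=\ell-1$ term of the right-hand side is needed and the passage via Lemma \ref{easy_lemma} is not required at that point.
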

As a consequence, the following result is straightforward.  

\begin{prop}\label{nishabi}(\cite[Proposition 4.3]{NPR}) If $ \| f_n \| ^2_{\H^{\otimes \ell}}\cdot \ell! \lto 1$ and 
 \begin{align}\label{CLT_suff} 
  \big\| f_n\star^m_m f_n \big\| _{\H^{\otimes (2\ell-2m)}} \lto 0 \,\, ,\quad\forall m = 1, 2, \cdot\cdot\cdot, \ell-1 \,\, ,
 \end{align}
as $n\to+\infty$, then $F_n$ converges in law to a standard Gaussian random variable.  \end{prop}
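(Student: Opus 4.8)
The plan is to combine the Wasserstein bound from \ref{NPbdd} with the two chaos-specific estimates \eqref{Gausspart} and \eqref{infpart}, and then show that the hypotheses \eqref{CLT_suff} together with the normalisation $\ell!\,\|f_n\|^2_{\H^{\otimes\ell}}\to 1$ force every term on the right-hand side of \eqref{Napp} to vanish. Since $\ell\geq 2$ is fixed and $p_k=q_k=1/2$ in the setting (S), the weights $1/\sqrt{p_kq_k}$ are just the constant $2$, so \eqref{infpart} reduces to a bound by a constant multiple of $\sqrt{\E[F_n^2]}\cdot\big(\sum_k \E[|D_kF_n|^4]\big)^{1/2}$, and \eqref{Gausspart} becomes $|1-\E[F_n^2]| + \tfrac1\ell\sqrt{\Var(\|DF_n\|_\H^2)}$. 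Because $\E[F_n^2]=\ell!\,\|f_n\|^2_{\H^{\otimes\ell}}\to 1$ by hypothesis, the deterministic term $|1-\E[F_n^2]|\to 0$ immediately.

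The core of the argument is then to control $\Var(\|DF_n\|_\H^2)$ and $\sum_k\E[|D_kF_n|^4]$. Here I would invoke \eqref{2221} and \eqref{4443} (the content of \cite[Theorem 4.1]{NPR}), which bound both quantities, up to a constant depending only on $\ell$, by $\sum_{m=1}^{\ell-1}\|f_n\star^m_m f_n\|^2_{\H^{\otimes(2\ell-2m)}}$. By hypothesis \eqref{CLT_suff}, each $\|f_n\star^m_m f_n\|_{\H^{\otimes(2\ell-2m)}}\to 0$ for $m=1,\dots,\ell-1$, and since this is a finite sum (the index set $\{1,\dots,\ell-1\}$ is fixed), the whole sum tends to $0$. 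Consequently $\Var(\|DF_n\|_\H^2)\to 0$ and $\sum_k\E[|D_kF_n|^4]\to 0$, so both summands in \eqref{Gausspart} vanish and, combined with the boundedness of $\E[F_n^2]$, the expression in \eqref{infpart} vanishes as well. Plugging all of this into \eqref{Napp} gives $d_W(F_n,Z)\to 0$.

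Finally, convergence in Wasserstein distance to $Z\sim\mathscr{N}(0,1)$ implies convergence in distribution (the Wasserstein distance metrises weak convergence on the space of probability measures with finite first moment, and in any case $d_W$ dominates the Fortet--Mourier distance $d_{\mathrm{FM}}$, which metrises weak convergence), which is exactly the claimed conclusion. I do not anticipate a genuine obstacle: the statement is essentially a corollary assembled from \ref{NPbdd}, \eqref{Gausspart}, \eqref{infpart} and the already-quoted estimates \eqref{2221}--\eqref{4443}. The only point requiring a little care is bookkeeping the $\ell$-dependent constants and confirming that the hypothesis $\ell!\,\|f_n\|^2\to 1$ is used both to kill $|1-\E[F_n^2]|$ and to keep the prefactor $\sqrt{\E[F_n^2]/\ell}$ in \eqref{infpart} bounded; everything else is passing finitely many vanishing terms through continuous operations.
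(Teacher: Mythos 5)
Your argument is correct and is exactly the route the paper intends: the proposition is presented as a straightforward consequence of the contraction estimates \eqref{2221}--\eqref{4443} combined with the normal-approximation bound, which is precisely how you assemble \eqref{Napp}, \eqref{Gausspart} and \eqref{infpart} (with $p_k=q_k=1/2$) to get $d_W(F_n,Z)\to 0$ and hence convergence in law. No gaps; the only cosmetic remark is that the hypotheses $F_n=J_\ell(f_n)$, $\ell\ge 2$, setting (S), are inherited from the preceding proposition, as you implicitly assume.
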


\subsection{Almost sure central limit theorem for Rademacher chaos}

The following  lemma  is crucial for us to apply the Ibragimov-Lifshits criterion.  The Gaussian analogue was proved in \cite[Lemma 2.2]{BNT10} and the Poisson case was given in \cite[Proposition 5.2.5]{cengbo_thesis}.

\begin{lem}\label{ILbdd} In  the setting (NS),  if $F\in\DD$ is centred such that $  \langle -DL^{-1} F ,  DF \rangle_\H \in L^2(\Omega)$ and
       $$ \sum_{k\in\N}\frac{1}{p_kq_k} \E\big[\vert D_kF\vert^4\big]  < +\infty\,\, ,$$ 
then
\begin{align}
  \big\vert \E\big[ e^{itF}\big] -  e^{-t^2/2} \big\vert & \leq    \vert t \vert ^2 \cdot  \bv 1 - \E[F^2] \bv  +  t^2 \cdot \sqrt{\Var\big(    \langle -DL^{-1} F ,  DF \rangle_\H     \big)} \notag\\ 
   &\qquad +  \vert t\vert^3 \cdot \sum_{k\in\N}\frac{1}{\sqrt{p_kq_k}}\E\Big[ \vert D_k L^{-1} F \vert \cdot (D_kF)^2 \Big]  \,\, . \label{EST1}
\end{align}
In particular, if $F = J_\ell(f)$ for some  $\ell\in\N$ and $f\in\H^{\odot \ell}_0$, then
\begin{align} 
 \big\vert \E\big[ e^{itF}\big] -  e^{-t^2/2} \big\vert & \leq   \vert t \vert^2 \cdot \bv 1 - \ell! \| f \| _{\H^{\otimes \ell}}^2 \bv + \frac{\vert t\vert^2}{\ell} \sqrt{\Var\big( \| DF \| _\H^2 \big) } \notag \\
 &\qquad + \vert t\vert^3\cdot \sqrt{ \sum_{k\in\N}\frac{1}{p_kq_k} \E\big[\vert D_kF\vert^4\big] }  \sqrt{\E[ F^2]/\ell}  \,\,.\label{EST2}
\end{align}

\end{lem}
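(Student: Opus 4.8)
The plan is to mimic the Stein-type computation already carried out for the Wasserstein bound in \ref{NPbdd}, but applied to the test function $\phi(x) = e^{itx}$ instead of a function in $\F_W$. First I would fix $t\in\R$ and write $\phi(x) = e^{itx}$, noting $\phi$ is smooth with $\|\phi'\|_\infty = |t|$ and $\|\phi''\|_\infty = t^2$, so the chain rule \ref{chainrule} applies and gives $D_k\phi(F) = \phi'(F)D_kF + R_k$ with $|R_k| \le \tfrac{t^2}{2\sqrt{p_kq_k}}|D_kF|^2$ in the setting (NS). The starting point is the exact identity obtained from the integration-by-parts formula \ref{IBP}: for centred $F\in\DD$,
\begin{align*}
\E\big[ F e^{itF}\big] = \E\big[ \langle D e^{itF}, -DL^{-1}F\rangle_\H\big] = it\,\E\big[ e^{itF}\langle DF, -DL^{-1}F\rangle_\H\big] + \E\big[\langle R, -DL^{-1}F\rangle_\H\big].
\end{align*}

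Next I would introduce the auxiliary function $\psi(t) := \E[e^{itF}]$ and observe that $\psi'(t) = i\,\E[F e^{itF}]$. Substituting the identity above, one gets
\begin{align*}
\psi'(t) = -t\,\psi(t) - t\,\E\big[ e^{itF}\big(\langle DF,-DL^{-1}F\rangle_\H - 1\big)\big] + i\,\E\big[\langle R, -DL^{-1}F\rangle_\H\big],
\end{align*}
i.e. $\psi'(t) + t\psi(t) = \rho(t)$ where the remainder $\rho(t)$ collects the last two terms. Multiplying by the integrating factor $e^{t^2/2}$ and integrating from $0$ to $t$ (using $\psi(0)=1$) yields $\psi(t) = e^{-t^2/2} + e^{-t^2/2}\int_0^t e^{s^2/2}\rho(s)\,ds$, hence
\begin{align*}
\big|\E[e^{itF}] - e^{-t^2/2}\big| \le e^{-t^2/2}\Big|\int_0^t e^{s^2/2}|\rho(s)|\,ds\Big|.
\end{align*}
One then bounds $|\rho(s)| \le |s|\,\E[|1 - \langle DF,-DL^{-1}F\rangle_\H|] + \langle \tfrac{1}{\sqrt{pq}}|DL^{-1}F|, |DF|^2\rangle_\H$-type terms using $|e^{isF}|\le 1$ and $|R_k|\le \tfrac{s^2}{2\sqrt{p_kq_k}}|D_kF|^2$; the first piece is further estimated by $|1-\E[F^2]| + \sqrt{\Var(\langle DF,-DL^{-1}F\rangle_\H)}$ via the triangle and Cauchy–Schwarz inequalities exactly as in the proof of \eqref{Gausspart}. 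Carrying the powers of $s$ through the elementary integral $e^{-t^2/2}\int_0^{|t|} s^j e^{s^2/2}\,ds \le |t|^j$ (valid since $s\mapsto s^{j}$ is increasing on $[0,|t|]$ and $\int_0^{|t|}e^{s^2/2}\le |t|e^{t^2/2}$, or more crudely bounding $e^{s^2/2}\le e^{t^2/2}$) produces the powers $|t|^2$ and $|t|^3$ appearing in \eqref{EST1}.

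For the second statement \eqref{EST2}, I would specialize $F = J_\ell(f)$ with $f\in\H^{\odot\ell}_0$, so that $D_kF = \ell J_{\ell-1}(f(\cdot,k))$ and $D_kL^{-1}F = -J_{\ell-1}(f(\cdot,k)) = -\tfrac{1}{\ell}D_kF$. Then $\langle DF,-DL^{-1}F\rangle_\H = \tfrac{1}{\ell}\|DF\|_\H^2$ and $\E[F^2]=\ell!\|f\|^2_{\H^{\otimes\ell}}$, which turns the first term into $|t|^2|1-\ell!\|f\|^2|$ and the $\Var$ term into $\tfrac{|t|^2}{\ell}\sqrt{\Var(\|DF\|_\H^2)}$. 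For the last term, $\sum_k \tfrac{1}{\sqrt{p_kq_k}}\E[|D_kL^{-1}F|\,(D_kF)^2] = \tfrac{1}{\ell}\sum_k \tfrac{1}{\sqrt{p_kq_k}}\E[|D_kF|^3]$, and a Cauchy–Schwarz split $\E[|D_kF|^3] = \E[|D_kF|\cdot |D_kF|^2] \le \E[(D_kF)^2]^{1/2}\,\E[(D_kF)^4]^{1/2}$ over $k\in\N$, together with $\sum_k\E[(D_kF)^2] = \ell\,\E[F^2]$, gives the bound $\sqrt{\E[F^2]/\ell}\cdot\sqrt{\sum_k \tfrac{1}{p_kq_k}\E[|D_kF|^4]}$, as in \eqref{infpart}.

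The main obstacle I anticipate is purely technical bookkeeping with the integrating-factor argument: one must be careful that the elementary estimates $e^{-t^2/2}\int_0^{|t|}s^j e^{s^2/2}\,ds \le |t|^j$ actually hold (they do, since $\tfrac{d}{ds}(s^{j-1}e^{s^2/2}) \ge s^j e^{s^2/2}$ for $j\ge 1$, giving the bound by the fundamental theorem of calculus), and that all the expectations and interchanges of sum, integral and expectation are justified — this is where the hypotheses $\langle -DL^{-1}F,DF\rangle_\H\in L^2(\Omega)$ and $\sum_k \tfrac{1}{p_kq_k}\E[|D_kF|^4]<\infty$ are used to guarantee finiteness. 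Everything else is a direct transcription of the Stein-method computation already performed for \ref{NPbdd}.
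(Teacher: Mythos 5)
Your proposal is correct and follows essentially the same route as the paper: integration by parts (\ref{IBP}) plus the new chain rule (\ref{chainrule}) applied to $x\mapsto e^{itx}$, then comparison of the characteristic function with $e^{-t^2/2}$ — your integrating-factor/Duhamel step is just the explicit integral form of the paper's bound $\vert\phi(t)-\phi(0)\vert\le \vert t\vert\sup_{\vert s\vert\le\vert t\vert}\vert\phi'(s)\vert$ for $\phi(t)=e^{t^2/2}\E[e^{itF}]$, and the reduction to \eqref{EST2} via $D_kL^{-1}F=-\ell^{-1}D_kF$ and Cauchy--Schwarz is identical to \eqref{Gausspart}--\eqref{infpart}. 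The only cosmetic slip is the stated elementary inequality $e^{-t^2/2}\int_0^{\vert t\vert}s^je^{s^2/2}\,ds\le\vert t\vert^j$, whose given justification actually yields $\vert t\vert^{j+1}/(j+1)$ — but that is precisely what is needed to produce the powers $t^2$ and $\vert t\vert^3$ in \eqref{EST1}, so the argument stands.
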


\begin{proof}
Set $\phi(t) = e^{t^2/2} \cdot \E\big[ e^{it F} \big]$, $t\in\R$. Then 
 \begin{align}\label{useful2} \big\vert \E\big[ e^{itF}\big] -  e^{-t^2/2} \big\vert = \vert \phi(t) - \phi(0) \vert \cdot e^{-t^2/2} \leq     e^{-t^2/2}  \cdot  \vert t \vert \cdot \sup_{ \vert s \vert \leq \vert t\vert } \bv \phi'(s) \bv \,\, .
 \end{align}
Clearly,  $$\phi'(t) =  t e^{t^2/2} \E[ e^{itF} ] + ie^{t^2/2} \E[ F \cdot e^{it F} ] = t e^{t^2/2} \E[ e^{itF} ] + ie^{t^2/2} \E[ \langle -DL^{-1} F , D e^{it F} \rangle_\H ] \, ,$$ and it follows from  \ref{chainrule} that for each $k\in\N$, 
 $$D_k e^{it F} = it e^{itF} D_k F  + R_k$$
  with $\vert R_k\vert \leq  \vert t \vert^2 \cdot \vert D_kF\vert^2/\sqrt{p_kq_k}$.  Therefore, 
\begin{align*}
 \phi'(t)  =  t e^{t^2/2} \E\big[ e^{itF} \big]   -t e^{t^2/2} \E\Big[ e^{itF}\cdot  \langle -DL^{-1} F ,  DF \rangle_\H \Big]  +  ie^{t^2/2} \E\big[ \langle -DL^{-1} F , R \rangle_\H \big] .  
\end{align*}
Then by triangle inequality,  one has 
$$\bv \phi'(t) \bv  \leq  \vert t\vert e^{t^2/2}\cdot \E\big[ \vert 1 -  \langle -DL^{-1} F ,  DF \rangle_\H \vert \big] + e^{t^2/2} \bv  \E\big[ \langle -DL^{-1} F , R \rangle_\H \big] \bv     . $$ 
 It follows from \eqref{useful2}  that
\begin{align*}
 \big\vert \E\big[ e^{itF}\big] -  e^{-t^2/2} \big\vert  \leq t^2\E\big[ \vert 1 -  \langle -DL^{-1} F ,  DF \rangle_\H \vert \big] + \vert t\vert^3\cdot \sum_{k\in\N}\frac{1}{\sqrt{p_kq_k}}\E\Big[ \vert D_k L^{-1} F \vert \cdot (D_kF)^2 \Big],
\end{align*}
then the desired inequality \eqref{EST1} follows from the estimate $\E\big[ \vert 1 -  \langle -DL^{-1} F ,  DF \rangle_\H \vert \big] \leq   \bv 1 - \E[F^2] \bv  +  \sqrt{\Var\big(    \langle -DL^{-1} F ,  DF \rangle_\H     \big)}$, see \ref{NPbdd}. The rest is straightforward.   \end{proof}

The following theorem provides sufficient conditions for the ASCLT on a fixed Rademacher chaos in the setting (S). The analogous results in the Gaussian and Poisson settings  can be found in \cite{BNT10, cengbo_thesis} respectively.

\begin{thm}\label{thm4.1} In the setting (S), fix $\ell \geq 2$ and $F_n =  J_\ell(f_n)$ with $f_n\in\H^{\odot \ell}_0$ for each $n\in\N$. Assume that $ \| f_n \| ^2_{\H^{\otimes \ell}}\cdot \ell! = 1$,  and the following two conditions  as well as \eqref{CLT_suff} are satisfied:
\begin{align*} 
  \text{\bf\small C-1}&\qquad \sum_{n\geq 2} \frac{1}{n\gamma_n^3} \sum_{k,j=1}^n \frac{\bv \langle f_k, f_j \rangle_{\H^{\otimes \ell}}\bv  }{kj} <+\infty \,\, ;\\
  \text{\bf\small C-2}&\qquad \sum_{n\geq 2} \frac{1}{n\gamma_n^2} \sum_{k=1}^n \frac{1}{k} \cdot \big\| f_k\star^m_m f_k \big\| _{\H^{\otimes 2\ell-2m}}    < +\infty \,\, ,\quad\forall m=1, \ldots, \ell-1 \, .
  \end{align*}
Then the ASCLT holds for $(F_n)_{n\in\N}$. 

\end{thm}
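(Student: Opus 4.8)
The plan is to verify the Ibragimov--Lifshits criterion \eqref{ILcondition} for the sequence $(F_n)$. Since $\|f_n\|_{\H^{\otimes\ell}}^2\cdot\ell!=1$ and \eqref{CLT_suff} holds, Proposition \ref{nishabi} already gives $F_n\xrightarrow{\text{law}}Z\sim\mathscr{N}(0,1)$, so by the criterion it suffices to show that, for every $r>0$,
\begin{align*}
\sup_{|t|\leq r}\sum_{n\geq 2}\frac{1}{n\gamma_n}\,\E\big[|\Laplace_n(t)|^2\big]<+\infty,\qquad
\Laplace_n(t)=\frac{1}{\gamma_n}\sum_{k=1}^n\frac{1}{k}\Big[e^{itF_k}-e^{-t^2/2}\Big].
\end{align*}
Expanding the square, $\E[|\Laplace_n(t)|^2]=\gamma_n^{-2}\sum_{k,j=1}^n(kj)^{-1}\,\E\big[(e^{itF_k}-e^{-t^2/2})\overline{(e^{itF_j}-e^{-t^2/2})}\big]$. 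The diagonal and off-diagonal contributions are treated separately.

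First I would dispose of the ``diagonal-type'' part: write $e^{itF_k}-e^{-t^2/2}=(e^{itF_k}-\E[e^{itF_k}])+(\E[e^{itF_k}]-e^{-t^2/2})$, and bound $|\E[e^{itF_k}]-e^{-t^2/2}|$ by Lemma \ref{ILbdd} (the specialization \eqref{EST2}, with $p_k=q_k=1/2$). Using \eqref{2221} and \eqref{4443}, each $\|f_k\star^m_m f_k\|$ controls both $\Var(\|DF_k\|_\H^2)$ and $\sum_k\E[|D_kF_k|^4]$, so $|\E[e^{itF_k}]-e^{-t^2/2}|\leq C_r\sum_{m=1}^{\ell-1}\|f_k\star^m_m f_k\|_{\H^{\otimes 2\ell-2m}}$ for $|t|\leq r$ (absorbing the $|t|$, $|t|^2$, $|t|^3$ powers and noting $\ell!\|f_k\|^2=1$ kills the first term). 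Then the contribution of the product of two ``mean-difference'' terms, namely $\gamma_n^{-2}\sum_{k,j}(kj)^{-1}|\E[e^{itF_k}]-e^{-t^2/2}|\,|\E[e^{itF_j}]-e^{-t^2/2}|$, is bounded by $C_r\big(\gamma_n^{-1}\sum_{k=1}^n k^{-1}\sum_m\|f_k\star^m_m f_k\|\big)^2$, and summing $n^{-1}\gamma_n^{-1}$ times this against \textbf{C-2} — after a Cauchy--Schwarz / Abel summation argument turning the squared sum into $\gamma_n^{-1}\sum_k k^{-1}(\sum_m\|f_k\star^m_m f_k\|)\cdot(\text{bounded})$ — gives finiteness. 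The genuinely diagonal term $\gamma_n^{-2}\sum_{k=1}^n k^{-2}\,\E|e^{itF_k}-e^{-t^2/2}|^2\leq 4\gamma_n^{-2}\sum_k k^{-2}$ is summable against $n^{-1}\gamma_n^{-1}$ trivially.

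The main work is the genuinely off-diagonal, centered part $\E\big[(e^{itF_k}-\E e^{itF_k})(\overline{e^{itF_j}-\E e^{itF_j}})\big]=\Cov(e^{itF_k},\overline{e^{itF_j}})$ for $k<j$. Here I would use the Malliavin integration by parts: since $x\mapsto e^{itx}$ is $C^1$ with Lipschitz derivative, Lemma \ref{IBP} together with the chain rule Lemma \ref{chainrule} gives, for $k<j$,
\begin{align*}
\Cov\big(e^{itF_k},\overline{e^{itF_j}}\big)
&=\E\big[e^{itF_k}\big(\overline{e^{itF_j}}-\E\,\overline{e^{itF_j}}\big)\big]
=-it\,\E\big[\langle DF_k,\,-DL^{-1}(\overline{e^{itF_j}})\rangle_\H\, \overline{e^{itF_j}}/\overline{e^{itF_j}}\big]\cdots
\end{align*}
— more precisely one writes $\E[e^{itF_k}(\overline{e^{itF_j}}-\E\overline{e^{itF_j}})]=\E[\langle D e^{itF_k},-DL^{-1}(\overline{e^{itF_j}}-\E\overline{e^{itF_j}})\rangle_\H]$ and then $De^{itF_k}=it e^{itF_k}DF_k+R$ with $|R_k|\lesssim_r |D_kF_k|^2$; the dominant term is $|t|\,\E[|\langle DF_k,-DL^{-1}\overline{e^{itF_j}}\rangle_\H|]$, and since $-D_aL^{-1}\overline{e^{itF_j}}$ is again obtained from $\overline{e^{itF_j}}$ one bounds this by $|t|^2\,\E[\langle|DF_k|,|DF_j|\rangle_\H]+(\text{remainder})$, and $\langle|DF_k|,|DF_j|\rangle_\H\leq\|DF_k\|_\H\|DF_j\|_\H$ whose expectation is controlled by $\sqrt{\E\|DF_k\|_\H^2}\sqrt{\E\|DF_j\|_\H^2}$ plus, crucially, the covariance term. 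The cleanest route is to reuse the algebraic identity that reduces $\Cov(e^{itF_k},\overline{e^{itF_j}})$ to a sum of terms each containing either $\langle f_k,f_j\rangle_{\H^{\otimes\ell}}$ (producing the hypothesis \textbf{C-1} after dividing by $kj$ and summing) or a contraction norm $\|f_k\star^m_m f_k\|$ or $\|f_j\star^m_m f_j\|$ (producing \textbf{C-2}); this is exactly the Gaussian computation of \cite[Lemma 2.2]{BNT10} transported to the Rademacher chaos via the product formula for multiple integrals $J_\ell$. I expect the main obstacle to be precisely this step: the Rademacher multiplication formula is more delicate than its Gaussian counterpart (it involves the diagonal restrictions $\Laplace_n$ and extra lower-order ``diagonal'' terms absent in the Wiener chaos), so one must check that all the terms produced can indeed be absorbed into a finite linear combination of $|\langle f_k,f_j\rangle_{\H^{\otimes\ell}}|$ and of the contraction norms $\|f_k\star^m_m f_k\|_{\H^{\otimes 2\ell-2m}}$, $1\leq m\leq\ell-1$, with constants depending only on $\ell$ and $r$. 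Granting that, summing the resulting bound $n^{-1}\gamma_n^{-3}\sum_{k,j=1}^n(kj)^{-1}|\langle f_k,f_j\rangle_{\H^{\otimes\ell}}|$ is finite by \textbf{C-1}, and the contraction contributions are handled by Cauchy--Schwarz plus Abel summation exactly as in the diagonal part, yielding finiteness by \textbf{C-2}. Combining the three estimates verifies \eqref{ILcondition} uniformly on $|t|\leq r$, and the Ibragimov--Lifshits criterion gives the ASCLT for $(F_n)$.
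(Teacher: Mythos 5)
Your overall strategy (verify the Ibragimov--Lifshits condition \eqref{ILcondition}, bound $|\E[e^{itF_k}]-e^{-t^2/2}|$ by \ref{ILbdd} together with \eqref{2221}--\eqref{4443}, and feed the inner products into \textbf{C-1} and the contractions into \textbf{C-2}) is the right one, but the core of the argument is missing. After centring $e^{itF_k}-e^{-t^2/2}$ you are left with the off-diagonal covariances $\Cov\big(e^{itF_k},e^{-itF_j}\big)$, and this is exactly the step you do not actually prove. The integration-by-parts route you sketch does not work as stated: \ref{IBP} gives $\E\big[\langle -DL^{-1}(e^{-itF_j}-\E e^{-itF_j}), De^{itF_k}\rangle_\H\big]$, but there is no chain-rule-type control of $DL^{-1}$ applied to the nonlinear functional $e^{-itF_j}$ (the claim that $-D L^{-1}e^{-itF_j}$ ``is again obtained from $e^{-itF_j}$'' has no justification in the paper's toolbox), and even the bound you aim for, of order $t^2\,\E\big[\langle |DF_k|,|DF_j|\rangle_\H\big]\leq t^2\sqrt{\E\|DF_k\|_\H^2}\sqrt{\E\|DF_j\|_\H^2}$, is of order a constant, not small in $k,j$, so it could never be summable against $n^{-1}\gamma_n^{-3}$. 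Your fallback, transporting the Gaussian computation via a Rademacher product formula, is precisely the delicate point you flag and then ``grant''; as written, the proof has a gap at its decisive estimate.

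The missing idea, which makes the whole problem elementary, is to avoid covariances of exponentials altogether: expand $|\Laplace_n(t)|^2$ exactly as in \eqref{For1}, so that besides $\E[e^{itF_k}]$ only the characteristic functions of the differences $F_k-F_j$ appear, and observe that $F_k-F_j=J_\ell(f_k-f_j)$ lives in the \emph{same} chaos. One then applies \ref{ILbdd} (i.e.\ \eqref{EST2}) to $J_\ell(g)$ with $g=(f_k-f_j)/\sqrt2$ and $s=\sqrt2\,t$: the variance-mismatch term is $|1-\ell!\|g\|^2|=\ell!\,|\langle f_k,f_j\rangle_{\H^{\otimes\ell}}|$, which is what produces \textbf{C-1}, while the contractions of $g$ expand into $f_k\star^m_m f_k$, $f_j\star^m_m f_j$ and the mixed terms $f_k\star^m_m f_j$, the latter being absorbed by \ref{easy_lemma} into self-contractions, which produces \textbf{C-2}; no product formula and no Abel-summation/squared-sum manipulation is needed (the resulting bound on $\E|\Laplace_n(t)|^2$ is linear in $\gamma_n^{-1}\sum_k k^{-1}\sum_m\|f_k\star^m_m f_k\|$, so \textbf{C-1} and \textbf{C-2} apply verbatim). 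Incidentally, in your treatment of the product of the two ``mean-difference'' terms the boundedness of $\gamma_n^{-1}\sum_{k\le n}k^{-1}\sum_m\|f_k\star^m_m f_k\|$ does follow from \eqref{CLT_suff}, but you should say so explicitly rather than appeal vaguely to Cauchy--Schwarz; in any case this complication disappears once the exact identity \eqref{For1} is used.
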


\begin{proof} Note first $F_n$ converges in law to a standard Gaussian random variable, by Proposition \ref{nishabi}.   Observe that 
\begin{align}
\bv \Laplace_n(t) \bv^2 & = \frac{1}{\gamma_n^2} \sum_{k,j=1}^n \frac{1}{kj} \Big( e^{it ( F_k - F_j )} - e^{-t^2} \Big) - \frac{e^{-t^2/2}}{\gamma_n} \sum_{k=1}^n \frac{1}{k} \Big( e^{it  F_k } - e^{-t^2/2} \Big) \label{For1} \\
& \qquad\qquad\qquad\qquad\qquad  -\frac{e^{-t^2/2}}{\gamma_n}  \sum_{j=1}^n \frac{1}{j} \Big( e^{-it F_j } - e^{-t^2/2} \Big) \,. \notag
\end{align}
Now we fix $r>0$, $t\in[-r, r]$.  For brevity, we omit the subscripts. One can deduce  from  \ref{ILbdd}   and \eqref{2221}, \eqref{4443} that 
\begin{align*}
  \big\vert \E\big[ e^{itF_k}\big] -  e^{-t^2/2} \big\vert  & \leq  C \cdot  \sqrt{   \sum_{m=1}^{\ell-1} \big\| f_k \star^m_m f_k \big\| ^2  } ,
  \end{align*}
here and in the following the constant $C$ may vary from line to line but only depend on $r, \ell$.

Since $\sqrt{a_1 + \ldots + a_l} \leq \sqrt{a_1} + \ldots + \sqrt{a_l}$ for any $ a_1, \ldots, a_l\geq 0$,  
$$ \big\vert \E\big[ e^{itF_k}\big] -  e^{-t^2/2} \big\vert    \leq C \cdot  \sum_{m=1}^{\ell-1} \big\| f_k \star^m_m f_k \big\| .$$   
  Similarly, we apply the same argument with $s = \sqrt{2} t$ and $g = (f_k - f_j)/\sqrt{2}$, and we  get 
\begin{align} \label{For2}
  \Bv \E\Big[ e^{it(F_k-F_j)}\Big] -  e^{-t^2} \Bv  =   \Bv \E\big[ e^{is \cdot J_\ell(g)  }\big] -  e^{-s^2/2} \Bv    \leq   C  \sum_{m=1}^{\ell-1}  \big\|  g \star^m_m g \big\|     +  C \cdot \bv  \langle f_k, f_j \rangle \bv   .\end{align}
Clearly,     $g \star^m_m g = \frac{1}{2}\big(  f_k \star^m_m f_k+ f_j \star^m_m f_j - f_k \star^m_m f_j - f_j \star^m_m f_k \big)$, then 
$$2 \big\| g \star^m_m g \big\|  \leq  \big\| f_k \star^m_m f_k \big\|  +  \big\| f_j \star^m_m f_j \big\|   +  2  \big\| f_k \star^m_m f_j \big\|  \,\, .    $$
 \ref{easy_lemma} implies   $   \big\| f_k \star^m_m f_j \big\|       \leq  \big\|   f_k\star^{\ell-m}_{\ell-m} f_k \big\|  +  \big\|   f_j\star^{\ell-m}_{\ell-m} f_j \big\| $. Therefore, 
\begin{align} 
  \Bv \E\Big[ e^{it(F_k-F_j)}\Big] -  e^{-t^2} \Bv &\leq  C   \bv \langle f_k, f_j \rangle \bv   + C     \sum_{m=1}^{\ell-1}   \big(  \big\| f_k \star^m_m f_k \big\|   +  \big\| f_j \star^m_m f_j \big\|          \big)  \, . \label{For3}
  \end{align}
Hence,  
\begin{align}
 &\qquad \E\big( \vert \Laplace_n(t) \vert^2 \big) \label{For4}\\
  &\leq \frac{C}{\gamma_n^2} \sum_{k,j=1}^n \frac{ \bv \langle f_k, f_j \rangle \bv }{kj}   +  \frac{C}{\gamma_n^2} \sum_{k,j=1}^n \frac{1}{kj}      \sum_{m=1}^{\ell-1} \big(    \big\| f_k \star^m_m f_k \big\|   +  \big\| f_j \star^m_m f_j \big\|    \big)     + \frac{C}{\gamma_n}   \sum_{k=1}^n \frac{1}{k}\cdot    \sum_{m=1}^{\ell-1}   \big\| f_k \star^m_m f_k \big\|    \notag \\
  & =  \frac{C}{\gamma_n^2} \sum_{k,j=1}^n \frac{ \bv \langle f_k, f_j \rangle \bv }{kj}         + \frac{2C}{\gamma_n}   \sum_{k=1}^n \frac{1}{k}\cdot    \sum_{m=1}^{\ell-1}   \big\| f_k \star^m_m f_k \big\|  \,\, . \notag
\end{align} 
Now we can see that  the conditions {\bf\small C-1, C-2} imply  the Ibragimov-Lifshits condition \eqref{ILcondition}, so the ASCLT holds for $(F_n, n\in\N)$.     \end{proof}

In the setting (S),  the normalised  partial sum $S_n = (Y_1 + \cdot\cdot\cdot + Y_n )/\sqrt{n}$ converges in law to a standard Gaussian random variable.  Moreover, the ASCLT holds for $(S_n)$, this is a particular case of \cite[Theorem 2]{LP1990}.  The following result is a slight generalisation of this classic example. 

\begin{cor}\label{ASCLT_1d}
In the setting (NS), let $F_n = J_1(f_n)$ be such that $\| f_n \| _\H = 1$ for all $ n\in\N$.  Assume that the following conditions hold:
\begin{align*} 
  \text{(i)}&\qquad \sum_{n\geq 2} \frac{1}{n\gamma_n^3} \sum_{k,j=1}^n \frac{\bv \langle f_k, f_j \rangle_\H \bv }{kj} < +\infty   \\
    \text{(ii)}&\qquad \sum_{n\geq 2} \frac{1}{n\gamma_n^2} \sum_{k=1}^n \frac{1}{k} \cdot   \sum_{m=1}^\infty \frac{1}{\sqrt{p_mq_m}} \vert f_k(m)\vert^3    < +\infty \\
      \text{(iii)}&\qquad  \sum_{m=1}^\infty \frac{1}{\sqrt{p_mq_m}} \vert f_k(m)\vert^3 \xrightarrow{k\to+\infty } 0 \,\, .
  \end{align*}
  Then  $F_n\xrightarrow{\text{law}} \mathscr{N}(0,1)$ and the ASCLT holds for $(F_n, n\in\N)$.
\end{cor}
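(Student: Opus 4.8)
The plan is to treat Corollary~\ref{ASCLT_1d} as the $\ell = 1$ instance of the machinery behind Theorem~\ref{thm4.1}, adapting the argument to the setting (NS). First I would record that for $F_n = J_1(f_n)$ one has $D_k F_n = f_n(k)$ and $D_k L^{-1} F_n = -f_n(k)$, so that $\langle -DL^{-1}F_n, DF_n\rangle_\H = \|f_n\|_\H^2 = 1$ is deterministic; in particular $\E[F_n^2] = 1$ and $\Var(\langle -DL^{-1}F_n, DF_n\rangle_\H) = 0$. Applying Lemma~\ref{ILbdd} (specifically the specialised bound \eqref{EST2} with $\ell = 1$, noting $\sum_k p_k q_k^{-1}\E[|D_kF_n|^4]$ reduces to $\sum_m (p_mq_m)^{-1}|f_n(m)|^4 \le \sum_m (p_mq_m)^{-1/2}|f_n(m)|^3$ when $|f_n(m)|\le 1$, or more directly the $|t|^3$-term in \eqref{EST1} is exactly $\sum_m (p_mq_m)^{-1/2}|f_n(m)|^3$) gives, for $|t|\le r$,
\begin{align*}
 \bigl| \E[e^{itF_n}] - e^{-t^2/2} \bigr| \le |t|^3 \sum_{m=1}^\infty \frac{1}{\sqrt{p_mq_m}} |f_n(m)|^3 .
\end{align*}
Condition (iii) then forces the right-hand side to $0$, which yields $F_n \xrightarrow{\text{law}} \mathscr{N}(0,1)$.

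Next I would verify the Ibragimov--Lifshits condition \eqref{ILcondition} for $\Laplace_n(t)$ as in \eqref{difference}, following the expansion \eqref{For1}--\eqref{For4} from the proof of Theorem~\ref{thm4.1} verbatim. The single-index terms are controlled by the bound just displayed. For the cross term $|\E[e^{it(F_k - F_j)}] - e^{-t^2}|$ I would apply the same estimate to $J_1(f_k - f_j)$: writing $g = (f_k - f_j)/\sqrt{2}$ and $s = \sqrt{2}t$, the relevant quantity is $s^2|1 - \|g\|_\H^2| + |s|^3 \sum_m (p_mq_m)^{-1/2}|g(m)|^3$ (there is no variance term since the $\ell=1$ bracket is again deterministic). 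Here $1 - \|g\|_\H^2 = \langle f_k, f_j\rangle_\H$ after using $\|f_k\|_\H = \|f_j\|_\H = 1$, and $\sum_m (p_mq_m)^{-1/2}|g(m)|^3 \le C\bigl(\sum_m (p_mq_m)^{-1/2}|f_k(m)|^3 + \sum_m (p_mq_m)^{-1/2}|f_j(m)|^3\bigr)$ by the elementary inequality $|a - b|^3 \le 4(|a|^3 + |b|^3)$. Substituting these bounds into the analogue of \eqref{For4} and summing, the double sum $\gamma_n^{-2}\sum_{k,j\le n}(kj)^{-1}|\langle f_k,f_j\rangle_\H|$ is summable against $(n\gamma_n)^{-1}$ precisely by condition (i), and the remaining piece reduces (exactly as in \eqref{For4}) to $C\gamma_n^{-1}\sum_{k\le n}k^{-1}\sum_m (p_mq_m)^{-1/2}|f_k(m)|^3$, summable against $(n\gamma_n)^{-1}$ by condition (ii). Hence \eqref{ILcondition} holds for every $r > 0$, and the Ibragimov--Lifshits criterion delivers the ASCLT for $(F_n)$.

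I do not expect a serious obstacle here: the corollary is essentially a transcription of Theorem~\ref{thm4.1} into the degree-one case, where the contraction norms $\|f_k \star^m_m f_k\|$ for $1\le m\le \ell-1$ disappear entirely (the range is empty) and are replaced by the single ``fourth-moment-type'' quantity $\sum_m (p_mq_m)^{-1/2}|f_k(m)|^3$ coming from the remainder in the chain rule. The one point deserving a little care is the bookkeeping constant in $|a-b|^3 \le C(|a|^3+|b|^3)$ and the factor $\sqrt{2}$ rescaling in passing from $F_k - F_j$ to a normalised first chaos element, together with checking that $\|g\star^0_0 g\|$-type terms genuinely vanish so that only $\langle f_k, f_j\rangle_\H$ and the cubic sums survive; none of this is more than routine. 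If one wishes, condition (iii) can also be seen as automatically implied by the convergence needed for the CLT, but stating it separately keeps the proof self-contained.
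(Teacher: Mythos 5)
Your proposal is correct and follows essentially the same route as the paper's own proof: it uses Lemma \ref{ILbdd} with the deterministic bracket $\langle -DL^{-1}F_n, DF_n\rangle_\H=\|f_n\|_\H^2$ in the first chaos, the $\sqrt{2}$-rescaling $g=(f_k-f_j)/\sqrt{2}$ with $1-\|g\|_\H^2=\langle f_k,f_j\rangle_\H$ and the elementary bound $|a-b|^3\le 4(|a|^3+|b|^3)$ for the cross terms, and then the Ibragimov--Lifshits summation exactly as in \ref{thm4.1} via conditions (i)--(iii). The only blemish is your parenthetical alternative through \eqref{EST2}, where the claimed inequality $(p_mq_m)^{-1}|f_n(m)|^4\le (p_mq_m)^{-1/2}|f_n(m)|^3$ needs $|f_n(m)|\le\sqrt{p_mq_m}$ rather than $|f_n(m)|\le 1$; but since you also invoke the $|t|^3$-term of \eqref{EST1} directly, which is precisely what the paper does, the argument stands as is.
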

 \begin{proof}  Note $-DL^{-1}F_n = f_n$ and $DF = f_n$. Then the quantity $ \langle -DL^{-1} F_n ,  DF_n \rangle_\H $ is deterministic and $\vert D_m L^{-1} F_n \vert\cdot (D_m F_n)^2 = \vert f_n(m)\vert^3$ for each $m\in\N$. Therefore, it follows from \ref{ILbdd} that
 $$ \big\vert \E\big[ e^{itF_n}\big] -  e^{-t^2/2} \big\vert \leq \vert t \vert^3 \sum_{m=1}^\infty \frac{1}{\sqrt{p_m q_m }}\vert f_n(m)\vert^3 \, .  $$
By (iii),  the CLT holds for $(F_n)$.  Similarly as in the proof of \ref{thm4.1}, we have 
\begin{align*}   \Bv \E\Big[ e^{it(F_k-F_j)}\Big] -  e^{-t^2} \Bv & \leq 2t^2 \bv \langle f_k, f_j \rangle_\H \bv + 2\sqrt{2} \vert t\vert^3  \sum_{m=1}^\infty \frac{1}{\sqrt{p_m q_m }}\vert f_k(m) - f_j(m)\vert^3  \\
& \leq 2t^2 \bv \langle f_k, f_j \rangle_\H \bv + 8\sqrt{2} \vert t\vert^3  \sum_{m=1}^\infty \frac{1}{\sqrt{p_m q_m }} \big( \vert f_k(m) \vert^3 + \vert f_j(m)\vert^3 \big) \,\, ,
\end{align*}
where the last inequality follows from the elementary inequality $(a + b)^3 \leq 4 a^3 + 4 b^3$ for any $a, b \geq 0$. The rest of the proof goes along the same lines as in \ref{thm4.1}.
\end{proof}

To conclude this section,  we give the following example as an application of \ref{thm4.1}.

\begin{example} In the setting (S), we consider the symmetric kernels $f_n\in \H^{\odot 2}_0$ for $n\geq 1$:
 \begin{align*}
  f_n(i,j)  = \begin{cases} 
\dfrac{1}{2\sqrt{n}} \quad\text{if $i,j\in\{1, 2,\cdot\cdot\cdot, 2n\}$ and $\vert i - j\vert = n$ ;}\\
 0 \qquad\text{otherwise.} 
  \end{cases}
 \end{align*}
Setting $F_n = J_2(f_n)$, we claim that the ASCLT holds for $(F_n, n\geq 1)$.  \bigskip
\begin{proof}
It is easy to get $2\| f_n \| _{\H^{\otimes 2} }^2 = 1$ and 
\begin{align*}
f_n\star^1_1 f_n(i,j) = \begin{cases}
\dfrac{1}{4n}   \quad \text{if $i=j\in\{1, 2,\cdot\cdot\cdot, 2n\}$;} \\
0 \qquad \text{otherwise.}
\end{cases}
\end{align*}
So $\| f_n\star^1_1 f_n \| _{\H^{\otimes 2}} = \dfrac{1}{2\sqrt{2n}}$ converges to zero as $n\to+\infty$, thus the CLT follows from Proposition \ref{nishabi}. If $k < \ell$,  then 
 $$\langle f_k, f_\ell \rangle_{\H^{\otimes 2}} = \sum_{i,j=1}^{2k} f_k(i,j) f_\ell(i,j) =  \sum_{i,j=1}^{2k} f_k(i,j) \1_{(\vert i - j \vert = k)} f_\ell(i,j) \1_{(\vert i-j\vert = \ell)} = 0 \,\, , $$
thus 
$$\sum_{n\geq 2} \frac{1}{n\gamma_n^3} \sum_{k,\ell=1}^n \frac{\bv \langle f_k, f_\ell \rangle_{\H^{\otimes 2}}\bv  }{k\ell} = \sum_{n\geq 2} \frac{1}{n\gamma_n^3} \sum_{k=1}^n \frac{\bv \langle f_k, f_k \rangle_{\H^{\otimes 2}}\bv  }{k^2}  = \sum_{n\geq 2} \frac{1}{n\gamma_n^3} \sum_{k=1}^n \frac{1 }{2k^2} < +\infty \,\, . $$
That is, the condition ({\bf C-1}) in \ref{thm4.1} is satisfied. It remains to check the condition ({\bf C-2}):
\begin{align*}
\sum_{n\geq 2} \frac{1}{n\gamma_n^2} \sum_{k=1}^n \frac{1}{k} \cdot \big\| f_k\star^1_1 f_k \big\| _{\H^{\otimes 2}}  =   \sum_{n\geq 2} \frac{1}{n\gamma_n^2} \sum_{k=1}^n \frac{1}{k} \cdot \dfrac{1}{2\sqrt{2k}}  <+\infty \,\, .
\end{align*}
Hence it follows from \ref{thm4.1} that the ASCLT holds for $(F_n, n\geq 1)$. \qedhere

\end{proof}

\end{example}


\begin{thebibliography}{00}

\bibitem{BNT10}

Bercu, B.,  Nourdin, I. and  Taqqu, M.S.   Almost sure central limit theorems on the Wiener space. {\it Stochastic Process. Appl.}, Vol. 120, Issue 9, pages 1607-1628.




\bibitem{Brosamler}

Brosamler, G.A. An almost everywhere central limit theorem. {\it Math. Proc. Cambridge Philos. Soc.} {\bf 104}(3)(1988) 561-574.

\bibitem{Chatterjee08}

Chatterjee, S. A new method of normal approximation. {\it Ann.  Probab.} 2008, {\bf 36}(4),1584-1610.


\bibitem{CGS}

Chen, L.H.Y., Goldstein, L. and Shao Q.M.  Normal approximation by Stein's method. {\it Probability and Its Applications}, 2011 Springer-Verlag Berlin Heidelberg.

\bibitem{ET14}

Eichelsbacher, P. and Th\"ale, Ch. New Berry-Esseen bounds for non-linear functionals of Poisson random measures. {\it Electron. J. Probab.} {\bf 19}(2014), no. 102, 1-25.


\bibitem{ILpaper}

Ibragimov, I.A. and Lifshits, M.A. On almost sure limit theorems. {\it Theory PROBAB. APPL.} Vol44, No. 2, (1999) translated by M.V. Khatuntseva from Russian journal  {\it Teor. Veroyatnost. i Primenen}. {\bf 44}(2), 328-350(1999).



\bibitem{Krokowski15}

Krokowski, K. Poisson approximation of Rademacher functionals by the Chen-Stein method and Malliavin Calculus. \href{http://arxiv.org/abs/1505.01417v2}{arXiv:1505.01417v2 [math.PR]}, preprint(2015) 

\bibitem{KRC14}
 Krokowski, K., Reichenbachs, A. and  Th\"ale, Ch. Berry-Esseen bounds and multivariate limit theorems for functionals of Rademacher sequences. {\it Ann. Inst. Henri Poincar\'e Probab. Stat.} {\bf 52} (2016) {\bf no. 2}, 763-803.
\bibitem{KRC15}

 Krokowski, K., Reichenbachs, A. and  Th\"ale, Ch.  Discrete Malliavin-Stein method: Berry-Esseen bounds for random graphs, point processes and percolation. to appear in \emph{Ann. Probab.}
 
 
 
 
 
\bibitem{LP1990}

Lacey, M.T. and Phillip, W. A note on the almost sure central limit theorem. {\it Statist. Probab. Lett.} {\bf 9}(3), 201-205 (1990).






\bibitem{website}
Nourdin, I.   \href{https://sites.google.com/site/malliavinstein/home}{https://sites.google.com/site/malliavinstein/home (a website maintained by Ivan Nourdin)}

\bibitem{NP1}

Nourdin, I. and  Peccati, G.   Stein's method on Wiener chaos, {\it Probab. Theory  Related Fields}, 2009, Vol. 145, Issue 1, p. 75-118.



\bibitem{bluebook}

 Nourdin, I. and   Peccati, G.   Normal approximations with Malliavin calculus: from Stein's method to universality, \emph{Cambridge tracts in Mathematics}, Vol. 192, 2012, Cambridge University Press.


\bibitem{NPR}

 Nourdin, I.,   Peccati, G. and   Reinert, G.     Stein's method and stochastic analysis of Rademacher functionals, \emph{Electron. J.  Probab.}, Vol. 15 (2010), Paper no. 55, pages 1703-1742.




 
\bibitem{FMT}

Nualart, D. and Peccati, G. Central limit theorems for sequences of multiple stochastic integrals. \emph{Ann. Probab.} {\bf 33}(1), 177-193 (2005).





\bibitem{PSTU}

Peccati, G., Sol\'e, J.L., Taqqu, M.S. and Utzet, F.   Stein's method and normal approximation of Poisson functionals, {\it Ann.  Probab.}, 2010, Vol. 38, No. 2, 443-478. 



\bibitem{Privault1}

 Privault, N.   Stochastic analysis of Bernoulli processes, \emph{Probab. Surv.} Vol. 5 (2008) 435-483 

\bibitem{PT15}

Privault, N. and Torrisi, G.L. The Stein and Chen-Stein methods for functionals of non-symmetric Bernoulli processes, {\it ALEA  Lat. Am. J. Probab. Math. Stat.},  {\bf 12}(1), 309-356 (2015) 


\bibitem{Schatte}

Schatte, P. On Strong versions of the central limit theorem. {\it Math. Nachr.} {\bf 137}(1988) 249-256.




\bibitem{cengbo_thesis}

Zheng, C.  \href{https://tel.archives-ouvertes.fr/tel-00648611/document}{{\it Multi-dimensional ``Malliavin-Stein'' method on the Poisson space and its applications to limit theorems}}, Ph.D dissertation, Universit\'e Pierre et Marie Curie, Paris VI, 2011.


\end{thebibliography}
\end{document}